\providecommand*{\dashv}{%
  \mathrel{%
    \mathpalette\@dashv\vdash
  }%
}
\newcommand*{\@dashv}[2]{%
  \reflectbox{$\m@th#1#2$}%
}
\theoremstyle{plain}
\newtheorem{theorem}{Theorem}[section]
\newtheorem{corollary}[theorem]{Corollary}
\newtheorem{proposition}[theorem]{Proposition}
\theoremstyle{definition}
\newtheorem{example}[theorem]{Example}
\newtheorem{fact}[theorem]{Fact}
\newtheorem{definition}[theorem]{Definition}
\newtheorem{remark}[theorem]{Remark}
\newtheorem{ques}[theorem]{Question}
\newtheorem{conj}[theorem]{Conjecture}
\newtheorem*{theorem*}{Theorem}
\newcommand{\lk}{\leq_{\bf K}}
\newcommand{\mn}{\mathfrak{C}}
\newcommand{\oop}{\operatorname}
\newcommand{\gtp}{\oop{gtp}}
\newcommand{\gs}{\oop{gS}}
\newcommand{\cf}{\oop{cf}}
\newcommand{\ls}{\oop{LS}({\bf K})}
\newcommand{\llk}{\oop{L}({\bf K})}
\newcommand{\defeq}{\vcentcolon=}
\def\fork{\mathrel{\raise0.2ex\hbox{\ooalign{\hidewidth$\vert$\hidewidth\cr\raise-0.9ex\hbox{$\smile$}}}}}
\newcommand{\fk}[3]{#1 \underset{#2}{\fork} #3}
\newcommand{\fkc}[3]{#1 \underset{#2}{\bar{\fork}} #3}
\newcommand{\nr}[1]{\lVert #1 \rVert}
\newcommand{\card}[1]{\lvert #1 \rvert}
\newcommand{\al}{{\aleph_0}}
\newcommand{\mylabel}[2]
    {\protected@write\@auxout{}{\string\newlabel{#1}{{#2}{\thepage}%
      {\@currentlabelname}{\@currentHref}{}}}}}%
\newcommand{\mylabel}[2]
    {\protected@write\@auxout{}{\string\newlabel{#1}{{#2}{\thepage}}}}}
\begin{document}

\pagenumbering{roman}
\setcounter{page}{0}
\newpage
\pagenumbering{arabic}
\setcounter{page}{1}
\parindent=.35in
\begin{center}
        \begin{center}%
         {\Large\bfseries\scshape Categoricity transfer for short AECs\\ with amalgamation over sets}\\\vspace{1em}{\scshape Samson Leung}\\      
        \end{center}%
\end{center}
{\let\thefootnote\relax\footnote{Date: \today\\
AMS 2010 Subject Classification: Primary 03C48. Secondary: 03C45, 03C55.
Key words and phrases. Abstract elementary classes; Categoricity; Good frames; Tameness; Shortness; Amalgamation.
}} \addtocounter{footnote}{-1}
\begin{abstract}
Let ${\bf K}$ be an $\ls$-short abstract elementary class and assume more than the existence of a monster model (amalgamation over sets and arbitrarily large models). Suppose ${\bf K}$ is categorical in some $\mu>\ls$, then it is categorical in all $\mu'\geq\mu$. Our result removes the successor requirement of $\mu$ made by Grossberg-VanDieren \cite{GV06c}, at the cost of using shortness instead of tameness; and of using amalgamation over sets instead of over models. It also removes the primes requirement by Vasey \cite{s14} which assumes tameness and amalgamation over models. As a corollary, we obtain an alternative proof of the upward categoricity transfer for first-order theories \cite{morcat}\cite{sh31}. 
In our construction, we simplify Vasey's results \cite{s6,s8} to build a weakly successful frame. This allows us to use Shelah-Vasey's argument \cite{ss2} to obtain primes for sufficiently saturated models. If we replace the categoricity assumption by $\ls$-superstability, ${\bf K}$ is already excellent for sufficiently saturated models. This sheds light on the investigation of the main gap theorem for uncountable first-order theories within ZFC.
\end{abstract}
\vspace{1em}
\begin{center}
{\bfseries TABLE OF CONTENTS}
\end{center}
\tableofcontents
\vspace{2em}
 
\section{Introduction}\label{apsec1}
For first-order theories, we have the following categoricity theorems:
\begin{theorem}
\begin{enumerate}
\item \mylabel{morsh}{Theorem \thetheorem}\emph{\cite{morcat}} Let $T$ be a countable first-order theory. If $T$ is categorical in some uncountable cardinal, then it is categorical in all uncountable cardinals.
\item \emph{\cite{sh31}} Let $T$ be a first-order theory. If $T$ is categorical in some uncountable cardinal, then it is categorical in all uncountable cardinals.
\end{enumerate}
\end{theorem}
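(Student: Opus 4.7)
The plan is to prove part (1) following Morley's original strategy, and then indicate how Shelah's machinery extends the argument to part (2). The common skeleton is: categoricity in some uncountable $\kappa$ forces a strong stability property, which in turn forces the unique model of cardinality $\kappa$ to be saturated, which finally propagates saturation (hence categoricity) to every uncountable cardinal.

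First I would show that categoricity of $T$ in some uncountable $\kappa$ implies $\omega$-stability: if $T$ were not $\omega$-stable, some countable parameter set $A$ would carry $2^{\aleph_0}$ complete types, and using Ehrenfeucht-Mostowski models indexed by linear orders with many Dedekind cuts one constructs $2^{\kappa}$ pairwise non-isomorphic models of cardinality $\kappa$, contradicting categoricity. Second, I would show that the unique model $M_\kappa$ of size $\kappa$ is saturated via a Vaughtian pair argument: if $M_\kappa$ omitted some type $p$ over a finite parameter set, one builds a proper elementary pair $M \prec N$ with $|M| = |N|$ both omitting $p$, and then stretches this pair across uncountable cardinalities by Morley's two-cardinal theorem, ultimately producing models of $T$ of varied cardinalities that contradict categoricity.

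For the transfer itself, $\omega$-stability guarantees the existence of a saturated model of every uncountable cardinality, built as a union of an elementary chain successively realizing all types over parameter sets of smaller size. Saturated models of the same cardinality are isomorphic by back-and-forth, so categoricity propagates throughout $[\aleph_1, \infty)$. The main obstacle is the two-cardinal theorem itself: controlling Vaughtian pairs under cardinality changes is the delicate ingredient that makes saturation of $M_\kappa$ deducible from categoricity alone, and it is where the successor-versus-limit distinction for $\kappa$ must be handled uniformly. Part (2), due to Shelah, follows the same three-step skeleton but requires substantially more machinery: $\omega$-stability is replaced by stability in $|T|$ together with superstability, and classical saturated models are replaced by $F^s_{|T|^+}$-saturated models, with the forking calculus and the analysis of regular types supplying the existence and uniqueness results needed for the transfer. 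The alternative route advertised in this paper bypasses this first-order machinery by deducing the upward direction from the AEC categoricity transfer developed in the sequel, applied to $T$ viewed as an $\aleph_0$-short AEC with amalgamation over sets.
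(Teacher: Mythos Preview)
The paper does not give its own proof of this theorem: it is stated in the introduction purely as a cited fact, with references to \cite{morcat} and \cite{sh31}, and no argument follows. The nearest thing to a proof in the paper is the application in the final Example, where the main AEC transfer \ref{lastthm} is specialized to a complete first-order theory to recover only the \emph{upward} direction (categoricity in some $\mu>|T|$ implies categoricity in all $\mu'\geq\mu$); the paper explicitly notes that it cannot recover the downward direction to all $\mu'>|T|$, which in \cite{morcat,sh31} uses syntactic arguments. You already acknowledge this in your final sentence, so you have correctly identified the relationship between your sketch and the paper.

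Your outline of Morley's argument is broadly correct and is genuinely different from anything the paper does, since the paper's route goes through good frames, weak successfulness, excellence, and primes rather than $\omega$-stability, Vaughtian pairs, and the two-cardinal theorem. One small imprecision: in your saturation step you write ``if $M_\kappa$ omitted some type $p$ over a finite parameter set,'' but the relevant failure of saturation is omission of a type over a \emph{countable} (more generally, small) parameter set; the reduction to a formula with finitely many parameters happens inside the Vaughtian-pair argument, not as the hypothesis. Otherwise the skeleton you give for part (1) is the standard one, and your summary of how Shelah's machinery replaces $\omega$-stability by superstability and saturated models by $F^s_{|T|^+}$-saturated models for part (2) is accurate at the level of a sketch.
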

In the late seventies after Shelah completed his book \cite{sh90}, he came up with a far reaching program: develop classification theory for non-elementary classes. Thus he titled his papers \cite{sh87a,sh87b,sh88} ``Classification theory for non-elementary classes''. In the summer of 1976, Shelah proposed as a test question for such a theory (which appeared in \cite[Conjecture 2]{sh87a}):
\begin{conj}[Categoricity conjecture for $L_{\omega_1,\omega}$]
Let $\psi$ be a sentence of $L_{\omega_1,\omega}$ in a countable language. If $\psi$ is categorical in some $\mu\geq\beth_{\omega_1}$, then $\psi$ is categorical in all $\mu\geq\beth_{\omega_1}$.
\end{conj}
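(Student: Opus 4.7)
The plan is to reduce the conjecture to the Main Theorem announced in the abstract. Given a sentence $\psi\in L_{\omega_1,\omega}$ in a countable language, associate to it an AEC ${\bf K}(\psi)$ whose models are the models of $\psi$ of cardinality at least $\aleph_0$, equipped with the strong-substructure relation supplied by Shelah's presentation theorem; this yields $\ls=\aleph_0$. Arbitrarily large models are immediate from categoricity at some $\mu\geq\beth_{\omega_1}$, and categoricity in some $\mu>\ls$ is the hypothesis. What remains are the two non-trivial axioms of the Main Theorem, namely $\aleph_0$-shortness of ${\bf K}(\psi)$ and amalgamation over sets.

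For shortness, the key observation is that syntactic $L_{\omega_1,\omega}$-types are determined by their countable restrictions, so once Galois types are shown to coincide (above the Hanf number $\han$) with their syntactic counterparts, $\aleph_0$-shortness follows. The identification of Galois and syntactic types is the standard bridge for AECs arising from an $L_{\omega_1,\omega}$-sentence and should be available once categoricity and stability above $\han$ are established.

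The genuine obstacle is amalgamation over sets. Arbitrary $L_{\omega_1,\omega}$-definable classes need not amalgamate, and Shelah's original route obtains this through excellent classes under WGCH. Within ZFC the strategy suggested by the present paper would be the following: extract $\aleph_0$-superstability from categoricity at $\mu\geq\beth_{\omega_1}$; then, following the construction described in the abstract, simplify Vasey \cite{s6,s8} to build a weakly successful good $\aleph_0$-frame on a cofinal family of saturated models; and apply the Shelah--Vasey argument \cite{ss2} to obtain primes, whence amalgamation over sets for sufficiently saturated models. With shortness and amalgamation over sets in hand, the Main Theorem yields categoricity at every $\mu'\geq\mu$, and a complementary downward Hanf-number argument above $\beth_{\omega_1}$ (using the categoricity just obtained together with its structural consequences) fills in the remaining cardinals of $[\beth_{\omega_1},\mu)$.

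The hardest step will be producing amalgamation over sets in ZFC without invoking excellence: this is exactly where Shelah's original proof of the conjecture invoked WGCH, and extending the frame-theoretic machinery of this paper to a non-amalgamative starting AEC is the principal technical challenge on this route. A more modest but realistic intermediate target is to prove the conjecture assuming only amalgamation over sets for ${\bf K}(\psi)$ (for example in classes naturally embedding into an existing amalgamative AEC), since everything downstream of that hypothesis is delivered by the Main Theorem.
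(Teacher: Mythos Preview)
The statement you are attempting to prove is listed in the paper as a \emph{conjecture}, not as a theorem; the paper does not prove it and offers no proof to compare against. Your proposal is therefore not a reconstruction of an existing argument but a research programme toward an open problem, and should be evaluated as such.

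The central gap in your plan is circularity. You propose to verify the hypotheses of the Main Theorem---shortness and amalgamation over sets---by invoking the very machinery the paper builds \emph{under} those hypotheses. In particular, the paper's route to excellence and primes (Sections~\ref{apsec3}--\ref{apsec6}) presupposes a monster model throughout and, for the replacement of WGCH in \ref{stepcor}, explicitly requires amalgamation over sets. You cannot use that machinery to manufacture amalgamation over sets for an arbitrary $L_{\omega_1,\omega}$ class that does not already have it. Your own final paragraph essentially concedes this.

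Two further points. First, your shortness argument is not correct as stated: for a general $L_{\omega_1,\omega}$ sentence, Galois types need not coincide with syntactic types, even above the Hanf number and even under categoricity; the identification you want is exactly what tameness/shortness asserts, so you are assuming the conclusion. Second, the downward step to fill $[\beth_{\omega_1},\mu)$ is not a routine ``Hanf-number argument''; the paper itself notes (footnote in Section~\ref{apsec1}) that downward transfer below the first Hanf number is substantially harder, and the Main Theorem only gives categoricity upward from $\mu$. Your ``more modest intermediate target''---assuming amalgamation over sets outright---is already covered by the paper (see the first row of the table and Example (2) after \ref{lastthm}), so it is not new.
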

In the second edition of his book \cite{sh90}, the conjecture was generalized to:
\begin{conj}[Categoricity conjecture for $L_{\lambda^+,\omega}$]
Let $\psi$ be a sentence of $L_{\lambda^+,\omega}$ in a language of size $\lambda$. If $\psi$ is categorical in some $\mu\geq\beth_{(2^\lambda)^+}$, then $\psi$ is categorical in all $\mu\geq\beth_{(2^\lambda)^+}$.
\end{conj}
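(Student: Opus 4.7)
The plan is to reduce the conjecture to the main theorem promised in the abstract, which asserts that an $\ls$-short AEC with amalgamation over sets and arbitrarily large models, if categorical in some $\mu>\ls$, is categorical in all $\mu'\geq\mu$. Given a sentence $\psi\in L_{\lambda^+,\omega}$, I would associate to $\psi$ the AEC $\bf K$ whose objects are the models of $\psi$ and whose ordering is $L_{\lambda^+,\omega}$-elementary substructure; this class has Löwenheim-Skolem number $\lambda$, and arbitrarily large models are automatic from the categoricity hypothesis together with the downward Löwenheim-Skolem-Tarski theorem.

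First I would verify amalgamation over sets. Starting from the high categoricity hypothesis $\mu\geq\beth_{(2^\lambda)^+}$, Shelah's Hanf-number machinery, via the presentation theorem and Ehrenfeucht-Mostowski blueprints over long order-indiscernible sequences, produces amalgamation over models in the relevant cardinal range. Upgrading to amalgamation over sets is then a matter of closing an arbitrary subset to a small ambient model while controlling Galois types; I would follow the Hanf-style reflection at the heart of Shelah's argument and absorb the extra book-keeping into a closure operation on subsets of the monster.

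The principal obstacle will be $\lambda$-shortness, which is strictly stronger than tameness and is not generally known within ZFC for AECs arising from $L_{\lambda^+,\omega}$-sentences. My approach would be to exploit categoricity above $\beth_{(2^\lambda)^+}$: iterate Ehrenfeucht-Mostowski constructions to build highly saturated models in which Galois types of arbitrarily long tuples are pinned down by syntactic data, and then use categoricity to collapse any two Galois-distinct extensions of a common finite type to isomorphic copies. This is essentially where the full conjecture currently stalls in ZFC, and so I expect it to be the only real step that is not a direct consequence of the paper's framework.

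With $\lambda$-shortness, amalgamation over sets, and arbitrarily large models all in hand above $\beth_{(2^\lambda)^+}$, the main theorem transfers categoricity upward from any such $\mu$ to all larger cardinals. A downward sweep inside the tail $[\beth_{(2^\lambda)^+},\mu)$, by Grossberg-VanDieren-style arguments together with Shelah's omitting-types technology over Hanf ranges, would then close the remaining interval and yield the conjecture. I emphasise again that amalgamation over sets is the ingredient most naturally extracted from the present paper's setup, while shortness is the hard step that is not resolved here in full generality and is the reason this statement is recorded as a conjecture rather than a theorem.
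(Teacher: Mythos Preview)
The statement you are attempting to prove is recorded in the paper as an open \emph{conjecture}, not as a theorem; the paper does not claim to prove it and offers no proof. So there is no ``paper's own proof'' to compare against. Your proposal is therefore not so much a proof as a sketch of what obstacles would need to be overcome, and you correctly identify at the end that shortness is a genuine open problem that blocks the argument.

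That said, your treatment of amalgamation over sets is too optimistic and constitutes a second real gap. You write that one can obtain amalgamation over models from high categoricity via Hanf-number machinery and then ``upgrade'' to amalgamation over sets by closing subsets to small models. Neither step is established in the generality you need. Amalgamation over models from categoricity above the Hanf number is itself a long-standing open problem for general AECs in ZFC (it is known in special cases, e.g.\ under large cardinals or for universal classes, but not for arbitrary $L_{\lambda^+,\omega}$ sentences). And even granting amalgamation over models, there is no general closure trick that promotes it to amalgamation over sets: the whole point of the paper's hypothesis $\mn_{\text{set}}$ is that it is a genuinely stronger assumption than the existence of a monster model, and the paper uses it in an essential way in \ref{stepcor} precisely because it cannot be manufactured from the weaker hypothesis. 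So your proposal has at least two independent open problems embedded in it, not one, and the paper's framework does not resolve either of them for $L_{\lambda^+,\omega}$.
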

In \cite[Section 6]{sh702}, Shelah stated that classification theory for abstract elementary classes (AECs) is the most important direction of model theory. He conjectured:
\begin{conj}[Categoricity conjecture for AECs]
Let ${\bf K}$ be an AEC and $\lambda=\ls$. The threshold for categoricity transfer is $\beth_{(2^\lambda)^+}$ (the Hanf number).
\end{conj}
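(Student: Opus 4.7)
The plan is to combine the present paper's main technical advance with a derivation of its two extra assumptions -- shortness and amalgamation over sets -- from categoricity above the Hanf number $\han$. The strategy proceeds in three phases, the second being the principal obstacle.

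Phase one is to extract structural properties above $\han$ from categoricity alone. Via Shelah's Presentation Theorem and Ehrenfeucht-Mostowski functors, one obtains arbitrarily large models and no maximal models. Morley-style arguments on EM models, together with categoricity in one $\mu > \han$, should then yield at least amalgamation of sufficiently saturated models; Shelah's prior work on weak amalgamation under weak diamond-like hypotheses provides a template for pushing this up to amalgamation over saturated sets, which is what the paper's hypothesis effectively requires.

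Phase two, the central difficulty, is to derive $\ls$-shortness (or at least $\ls$-tameness) from categoricity. This is essentially the Tameness conjecture, a longstanding open problem: partial results by Boney, Grossberg, Vasey, and others establish tameness under large cardinal axioms or additional structural hypotheses, but a ZFC derivation from categoricity alone remains unknown. A plausible attempt is to pass to the sub-AEC of sufficiently saturated models, argue that categoricity makes all such models saturated, and leverage uniqueness of limit models together with Galois-stability counts to obtain locality of Galois types and of longer Galois types. Whether this route reaches full shortness rather than only tameness is unclear, and it seems to require a fundamentally new idea rather than a refinement of existing methods.

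Phase three, once shortness and amalgamation -- even only over models or over saturated sets -- are secured, applies the main theorem of this paper (or a mild variant adapted to amalgamation only over models). The simplified construction of a weakly successful good $\mu_0$-frame for some $\mu_0 < \han$ proceeds as in the paper's streamlining of Vasey's work; the Shelah--Vasey argument then yields primes over sufficiently saturated models, and categoricity transfers upward using excellence of the sub-AEC of saturated models and downward from $\mu$ via EM-model arguments and downward L\"owenheim--Skolem. Thus the present paper's contribution is to make this transfer step maximally efficient, so that any future ZFC proof of tameness/shortness from categoricity would immediately upgrade to a proof of the full conjecture.
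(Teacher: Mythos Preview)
The statement you are attempting to prove is a \emph{conjecture}, not a theorem: the paper states it as Shelah's open Categoricity conjecture for AECs and does not claim to prove it. There is therefore no ``paper's own proof'' to compare against. The paper's main theorem (\ref{lastthm}) establishes upward categoricity transfer only under the \emph{additional hypotheses} of $\ls$-shortness and amalgamation over sets; the conjecture asks for the same conclusion with no such hypotheses.

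Your proposal is honest about this: you correctly identify Phase two --- deriving shortness (or even tameness) from categoricity alone in ZFC --- as ``essentially the Tameness conjecture, a longstanding open problem'' that ``seems to require a fundamentally new idea.'' That is the genuine gap, and it is fatal to the proposal as a proof. Phase one has a parallel difficulty you understate: obtaining amalgamation (let alone amalgamation over sets) from categoricity above the Hanf number is also not known in ZFC; the templates you cite from Shelah use weak diamond or WGCH, which the conjecture does not allow. So your plan reduces the conjecture to two other open problems rather than solving it. Your Phase three is an accurate summary of what the present paper contributes, and your closing sentence is exactly right: the paper makes the transfer step efficient, so that a future ZFC derivation of tameness/shortness and amalgamation from categoricity would immediately yield the conjecture --- but that derivation does not yet exist.
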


The importance of these conjectures is the structural theory that needs to be developed. The main concept of the previously-developed structural theory for first-order theories is \emph{forking}: a canonical notion that generalizes \emph{combinatorial geometries} (also called \emph{matroids} when they are finitely generated). 

In about 3000 pages of publications towards these conjectures indeed such a theory evolved (see the \hyperlink{catlist}{table} at the end of this section for a partial list of results). We can divide the approaches into three types:
\begin{enumerate}
\renewcommand{\labelenumi}{\alph{enumi}.}
\item Assuming tameness and other model theoretic properties: Grossberg and VanDieren \cite{GV06c, GV06a} extracted the notion of tameness from \cite{sh394} and derived categoricity transfer from a successor cardinal for tame AECs with a monster model. Many subsequent results were obtained by Boney and Vasey but the successor assumption from \cite{GV06c} still could not be removed. Vasey \cite{s11} building upon Shelah's results, showed that categoricity transfer holds for AECs with amalgamation and primes (without starting from a successor cardinal) and managed to prove that the eventual categoricity conjecture is true for universal classes \cite{s8,s16}.
\item Assuming non-ZFC axioms and model theoretic properties: Shelah \cite{sh87a,sh87b} showed that under WGCH, if a countable theory in $L_{\omega_1,\omega}$ is excellent and has few models in $\aleph_n$ for $n<\omega$, then categoricity transfers up from an uncountable cardinal. \cite{shh} also developed heavy machineries such as good frames to derive categoricity transfers. However many of his results have technical assumptions which are not easy to verify. Later Shelah and Vasey \cite{ss2} generalized the notion of excellence to AECs and derived categoricity transfers assuming WGCH and restricting the spectrum in an interval of cardinals. A few variations were given in \cite{ss2,s31} where they replaced the spectrum requirements by other model theoretic properties.

Meanwhile, Makkai and Shelah \cite{ms90} proved that the eventual categoricity conjecture is true for an $L_{\kappa,\omega}$ theory starting at successor cardinals, where $\kappa$ is strongly compact. Boney \cite{bshort} showed that tameness holds for compact AECs (assuming the existence of strongly compact cardinals), thus by \cite{GV06c,GV06a} the eventual categoricity is true starting at successor cardinals. Eventually \cite{ss2} used the excellence argument to remove the successor assumption.
\item Using specific constructions: Cheung \cite{hanif} showed that given a free notion of amalgamation and the existence of prime models, the AEC behaves like strongly minimal theories, which allows one to manipulate the AEC algebraically.

Mazari-Armida \cite{mcat} combined decomposition results from algebra and categoricity transfer from \cite{s14} to characterize algebraically the property of being categorical in a tail. In particular, let $R$ be an associative ring with unity, he proved that the threshold of categoricity transfer is $(|R|+\al)^+$ for the class of locally pure-injective modules, flat modules and absolutely pure modules.

Esp\'{i}ndola \cite{esp1} used topos-theoretic argument to show that if the AEC has interpolation (which together with categoricity implies a stronger version of amalgamation), then the eventual categoricity conjecture holds (see also \cite{esp2}). However, there is no explicit bound to the threshold cardinal $\mu$. 
\end{enumerate}

In this paper we follow approach (a) above and focus on AECs that have a monster model, satisfy amalgamtion over sets and are type-short (a property that implies tameness, see \ref{shortdef}). In doing so we can remove the successor assumption in (2) in the \hyperlink{catlist}{table}. In our proof, we rely heavily on many recent papers and replace the use of WGCH in (8) by amalgamation over sets to obtain excellence. Then using \cite{ss2} that excellence implies primes, we can invoke the categoricity transfer in (3). A main application of this result is the removal of the successor requirement in the categorical transfer in \cite{ms90} (see also \cite[Question 6.14]{sh702} for the problem statement). 

Our work was motivated by a simple question: using the common model theoretic assumptions and techniques, can we recover the upward\footnote{\emph{Downward} transfer is a much harder problem for AECs: the currently known transfer with common assumptions is down to the first Hanf number. \ref{lastexam} shows that the first categoricity cardinal can go up to the first Hanf number, but such example fails amalgamation and joint-embedding.} categoricity transfer in \ref{morsh}? \cite{les} and \cite{hk} have relevant results but they require $\ls=\al$ and several additional assumptions (say \emph{simplicity}: there is a strong example by Shelah that in the context of homogeneous model theory, simplicity is not a consequence of $\al$-stability \cite{hyle02}). Such results might not be easy to check and generalize to uncountable $\ls$. Meanwhile, Vasey \cite[Section 4]{s11} adopted a hybrid approach where he quoted syntactic results from \cite{sh3,HS00} to conclude that a homogeneous diagram has primes and a nonforking relation over sets, and then combined it with the categoricity transfer for AECs with amalgamation and primes. In comparison, our result is cleaner because we do not invoke primeness or stability results from \cite{sh3,HS00}. The assumptions of shortness and amalgamation over sets are immediate to check. 

When we show excellence, we only require shortness, amalgamation over sets, arbitrarily large models and superstability. This way of obtaining excellence does not use any non-ZFC axioms and might shed light on the main gap theorem for uncountable first-order theories: \cite{GLmain} used an axiomatic framework to obtain the abstract decomposition theorem, a key step to the main gap theorem. The results from \cite{ss2} provide us with a multidimensional independence relation, which satisfies some of the axioms in \cite{GLmain}. For future work, one may look at the axioms on regular types (see \cite[Axioms 8-10]{GLmain}).

We now list some of the known results on categoricity transfer for AECs. The numbering is for reference only and is not chronological. We strengthen some of the assumptions to a monster model ``$\mn$'' for readability (unless they assumed a local frame). Here a monster model means amalgamation, joint embedding and no maximal models. We write ``$\mn_{\text{set}}$'' if we also require amalgamation over sets. We strengthen instances of WGCH in an interval of cardinals to full WGCH. Throughout we let $\lambda=\ls$. Except for (5)(12), we assume that the categoricity cardinal $\mu<h(\lambda)$ (so we can omit the downward transfer to the first Hanf number $h(\lambda)$). Some of the results can be combined but we highlight the new parts. 
The key results of categoricity transfers within ZFC are (2), (3) and (4). By assuming shortness and amalgamation over sets, we remove the successor assumption in (2) and (4), while removing the prime triples assumption in (3).

This paper was written while the author was working on a Ph.D. under the direction of Rami Grossberg at Carnegie Mellon University and we would like to thank Prof. Grossberg for his guidance and assistance in my research in general and in this work in particular.

\pagebreak
\begin{table}[t!]
\noindent\makebox[\textwidth]{
\begin{tabular}{|l|l|l|l|}
\hline
 \hypertarget{catlist}{}&Assumptions on ${\bf K}$&If $I(\mu,{\bf K})=1$ for some&Then $I(\mu',{\bf K})=1$ for all\\ \hline
 &$\lambda$-short, $\mn_{\text{set}}$&$\mu\geq\lambda^+$ & $\mu'\geq\mu$ (\ref{lastthm})\\ \hline
1. & Homogeneous diagram with $\mn_{\text{set}}$& $\mu\geq|T|^+$&$\mu'\geq\mu$ \cite[Theorem 4.22]{s11}\\ \hline
2. &$\lambda$-tame, $\mn$ & successor $\mu\geq\lambda^+$ & $\mu'\geq\mu$ \cite[Theorem 5.3]{GV06c}\\ \hline
3. &$\lambda$-tame, $\mn$, has primes&$\mu\geq\lambda^+$&$\mu'\geq\mu$ \cite[Theorem 10.9]{s14}\\\hline
4.& Has a type-full good $[\mu_1,\mu_2]$-&$\mu_1,\mu_2$ as on the left& $\mu'\in[\mu_1,\mu_2]$\\
& frame where $\mu_2$ is a successor $>\mu_1\geq\lambda$&  &\cite[Theorem 6.14]{s14}\\ \hline
5. &$\lambda<\kappa$ for some strongly compact $\kappa$&successor $\mu\geq\kappa^+$ & $\mu'\geq\mu$ \cite[Theorem 7.4]{bshort}\\ \hline
6. & Compact &  $\mu\geq\lambda^+$ & $\mu'\geq\mu$ \cite[Theorem 14.5]{ss2}\\ \hline
7.& Excellent & $\mu\geq\lambda^+$& $\mu'\geq\mu$ \cite[Theorem 14.2]{ss2}\\ \hline
8.& WGCH, has a $(<\omega)$-extendible& $\mu_2\geq\mu_1^+$&$\mu'\geq\mu_1^+$ \\ 
& categorical good $\mu_1$-frame&&\cite[Corollary 14.4]{ss2}\\\hline
9.&WGCH, $K_{\lambda^{++}}\neq\emptyset$ and for $n<\omega$,  &$\mu=\lambda,\lambda^+$ &$\mu'\geq\lambda$ \cite[Theorem 14.11]{ss2}\\ 
&$I(\lambda^{+n},{\bf K})<\mu_{\mathrm{unif}}(\lambda^{+n},2^{\lambda^{+(n-1)}})$ &&\\\hline
10.&WGCH, $\mn$ & $\mu_1,\mu_2\geq{\lambda}$& $\mu'\in[\mu_1,\mu_2]$ \cite[Lemma 9.5]{s31}\\ \hline
11.& WGCH, $\mn$ & $\mu>{\lambda}^{+\omega}$&$\mu'\geq\mu$ \cite[Lemma 9.6]{s31}\\ \hline
12. &Universal class & $\mu\geq\beth_{h(\lambda)}$ & $\mu'\geq\mu$ \cite[Theorem 7.3]{s16}\\ \hline
13.& $\mathrm{PC}_{\al}$, $\al$-tame, has primes, $2^{\aleph_0}<2^{\aleph_1}$&$\mu=\aleph_1$ &$\mu'\geq\mu$ \cite[Theorem 4.4]{sm}\\\hline
14.& WGCH, $\mathrm{PC}_{\al}$, $1\leq I(\aleph_1,{\bf K})<2^{\aleph_1}$, &$\mu\geq\aleph_1$ and $\mu=\aleph_0$&$\mu'\geq\al$\\
&and few models in $\aleph_n$&&\cite[Theorem 14.12]{ss2}\\\hline
15.&Atomic models of a countable first-order&$\mu\geq\aleph_1$&\cite{sh87a,sh87b}\\
&theory, WGCH, few models in $\aleph_n$&&\\\hline
16.& Universal $L_{\omega_1,\omega}$ sentence&Tail of $[\ls,\beth_\omega)$ &$\mu'\geq\beth_\omega$ \cite[Corollary 5.10]{s33}\\\hline
17. & Has prime and small models & $\mu\geq\mu({\bf K})+\lambda$ &$\mu'\geq\mu({\bf K})+\lambda+I(\lambda,{\bf K})^+$\\ 
 & and a free notion of amalgamation& &\cite[Theorem 5.7]{hanif}\\ \hline
18. & The class of  locally pure-injective modules/  & $\mu\geq(|R|+\al)^+$ & \cite{mcat}\\ 
& flat modules/absolutely pure modules& &\\ \hline
19. & Has interpolation & (no explicit bound) & \cite[Corollary 4.2]{esp1}\\ \hline
\end{tabular} }
\end{table}

\pagebreak

\section{Preliminaries}\label{apsec2}

In this section, we will define the main notions used in this paper (see \cite[Definition 2.2]{leung2} for the definition of AECs). Relevant results will be discussed in the subsequent sections. 

\begin{definition}\mylabel{setdef}{Definition \thetheorem}
Let ${\bf K}$ be an AEC and $\lambda\geq\ls$. The functions $f$ mentioned below will be ${\bf K}$-embeddings.
\begin{enumerate}
\item ${\bf K}$ has the \emph{$\lambda$-amalgamation property} ($\lambda$-$AP$) if for any $M_0,M_1,M_2\in K_\lambda$, $M_0\lk M_1$, $M_0\lk M_2$, there is $M_3\in K_\lambda$ and $f: M_1\xrightarrow[M_0]{}M_3$ such that $M_2\lk M_3$. ${\bf K}$ has the \emph{amalgamation property} ($AP$) when the above is true without the cardinal restriction.
\item ${\bf K}$ has the \emph{amalgamation property over set bases} (\emph{$AP$ over sets}) if for any $M_1,M_2\in K$, any $A\subseteq |M_1|\cap|M_2|$, there is $M_3\in K$ and $f: M_1\xrightarrow[A]{}M_3$ such that $M_2\lk M_3$. 
\item ${\bf K}$ has the \emph{$\lambda$-joint embedding property} ($\lambda$-$JEP$) if for any $M_1,M_2\in K$, there is $M_3\in K_\lambda$ and $f: M_1\rightarrow M_3$ such that $M_2\lk M_3$. ${\bf K}$ has the \emph{joint embedding property} ($JEP$) when the above is true without the cardinal restriction.
\item ${\bf K}$ has \emph{no maximal models} ($NMM$) if for any $M\in K$, there is $N\in K$ such that $M\lk N$ but $M\neq N$.
\item ${\bf K}$ has \emph{arbitrarily large models} ($AL$) if for any cardinal $\mu\geq\ls$, there is $M\in K_{\geq\mu}$.
\item ${\bf K}$ has a \emph{monster model} $\mn$ if it has $AP$, $JEP$ and $NMM$.
\item ${\bf K}$ has $\mn_{\text{set}}$ if it has $AP$ over sets (which implies $JEP$) and $NMM$. 
\end{enumerate}
\end{definition}
\begin{remark}
All the properties except for (2)(7) in the above definition hold in complete first-order theories because of the compactness theorem. (2)(7) hold if we also fix a monster model and they will only be used in Section \ref{apsec6}. See also the discussions around \cite[Definition 4.34, Lemma 18.8]{bal}.
\end{remark}
\begin{definition}
Let $\alpha\geq2$ be an ordinal. 
\begin{enumerate}
\item We denote \emph{Galois types} (\emph{orbital types}) of length $(<\alpha)$ as $\gs^{<\alpha}(\cdot)$ (see \cite[Definition 2.16]{s5}; we will not need the precise definition in this paper). The argument can be a set $A$ in some model $M\in K$. In general $\gs^{<\alpha}(A)\defeq\bigcup\{\gs^{<\alpha}(A;M):M\in K, |M|\supseteq A\}$ (under $AP$, the choice of $M$ does not matter).
\item ${\bf K}$ is \emph{$(<\alpha)$-stable in $\lambda$} if for any set $A$ in some model $M\in K$, $\card{A}\leq\lambda$, then $\card{\gs^{<\alpha}(A;M)}\leq\lambda$. We omit ``$(<\alpha)$'' if $\alpha=2$, while we omit ``in $\lambda$'' if there exists such a $\lambda\geq\ls$. Similarly ${\bf K}$ is \emph{$\alpha$-stable in $\lambda$} if for any such $A$ and $M$ above, we have $\card{\gs^\alpha(A)}\leq\lambda$. 
\end{enumerate}
\end{definition}

The notion of \emph{tameness} was introduced by Grossberg and VanDieren \cite{GV06c} as an extra assumption to an AEC. Later Boney \cite{bshort} introduced a dual property named \emph{shortness}. Tameness is a locality property on the domain of types while shortness is a locality property on the tuples that realize the types.
\begin{definition}\mylabel{shortdef}{Definition \thetheorem}
Let $\kappa$ be an infinite cardinal. 
\begin{enumerate}
\item Let $p=\gtp(a/A,N)$ where $a=\langle a_i:i<\alpha\rangle$ may be infinite, $I\subseteq\alpha$, $A_0\subseteq A$. We write $l(p)\defeq l(a)$, $p\restriction A_0\defeq \gtp(a/A_0,N)$, $a^I=\langle a_i:i\in I\rangle$ and $p^I\defeq\gtp(a^I/A,N)$.
\item $K$ is \emph{$(<\kappa)$-tame for $(<\alpha)$-types} if for any subset $A$ in some model of $K$, any $p\neq q\in\gs^{<\alpha}(A)$, there is $A_0\subseteq A$, $|A_0|<\kappa$ with $p\restriction A_0\neq q\restriction A_0$. We omit $(<\alpha)$ if $\alpha=2$.
\item $K$ is \emph{$(<\kappa)$-short} if for any $\alpha\geq2$, any subset $A$ in some model of $ K$, $p\neq q\in\gs^{<\alpha}(A)$, there is $I\subseteq\alpha$, $\card{I}<\kappa$ with $p^I\neq q^I$. 
\item \emph{$\kappa$-tame} means $(<\kappa^+)$-tame. Similarly for shortness.
\end{enumerate}
\end{definition}
\begin{remark}\mylabel{shortremark}{Remark \thetheorem}
By \cite[Corollary 3.18]{s5}, $(<\kappa)$-shortness implies $(<\kappa)$-tameness. First-order theories are trivially $(<\al)$-short, while a theorem due to Boney shows that universal classes are also $(<\al)$-short \cite[Theorem 3.7]{s8}. 
\end{remark}
The notion of a good frame was introduced in \cite[Chapter II]{shh}. The definition was extended for domains of sizes from an interval of cardinals (instead of a single cardinal) in \cite{s3} while for longer types in \cite{bv}. We follow the notation in \cite{bv} but specialize it in our context, where the types are always type-full (basic types coincide with nonalgebraic types) and we work inside a monster model. \cite{s6}, building on numerous papers, defined many more properties of a frame which cater for his coheir construction, which will not be considered here.

\begin{definition} Let ${\bf K}$ be an AEC with a monster model $\mn$, $\mu\geq\ls$ be a cardinal and $\alpha\geq2$ be an ordinal or $\infty$. A $(<\alpha,\geq\mu)$-good frame is a ternary relation $\fork$ such that:
\begin{enumerate}
\item If $(a,M_0,M_1)\in\ \fork$, then $a\in |M_1|^{<\alpha}$, $M_0\lk M_1$ and $M_0,M_1\in K_{\geq\mu}$. We write $\fk{a}{M_0}{M_1}$ and say $\gtp(a/M_1)$ \emph{does not fork over} $M_0$ (well-defined by invariance below).
\item (Invariance) If $f\in\oop{Aut}(\mn)$ and $\fk{a}{M_0}{M_1}$, then $\fk{f(a)}{f(M_0)}{f(M_1)}$.
\item (Monotonicity) If $\fk{a}{M_0}{M_1}$, $M_0\lk N_0\lk N_1\lk M_1$, $a'\subseteq a$ and $a'\in|N'|$, then $\fk{a'}{N_0}{N_1}$.
\item (Stability) For $M\in K_{\geq\mu}$, $|\gs(M)|\leq\nr{M}$.
\item (Existence) For $M\in K_{\geq\mu}$ and $a\in|M|^{<\alpha}$, $\fk{a}{M}{M}$.
\item (Extension) If $p\in\gs^{<\alpha}(M_1)$ does not fork over $M_0$, $M_1\lk M_2$ and $l(p)\leq\beta<\alpha$, then there is $q\in\gs^{\beta}(M_2)$ such that $q^{\beta}\restriction M=p$ and $q$ does not fork over $M_0$.
\item (Uniqueness) If $p,q\in\gs^{<\alpha}(M_1)$ do not fork over $M_0$ and $p\restriction M_0=q\restriction M_0$, then $p=q$.
\item (Transitivity) If $\fk{a}{M_0}{M_1}$ and $\fk{a}{M_1}{M_2}$, then $\fk{a}{M_0}{M_2}$.
\item (Local character) If $\delta$ is regular, $\langle M_i\in K_{\geq\mu}:i\leq\delta\rangle$ is increasing and continuous, $p\in\gs^{<\delta}(M_\delta)$, then there is $i<\delta$ such that $p$ does not fork over $M_i$.
\item (Continuity) If $\delta$ is a limit ordinal, both $\langle M_i\in K_{\geq\mu}:i\leq\delta\rangle$ and $\langle \alpha_i<\alpha:i\leq\delta\rangle$ both increasing and continuous, $p_i\in\gs^{\alpha_i}(M_i)$ increasing in $i<\delta$, then there is some $p\in\gs^{\alpha_\delta}(M_\delta)$ such that for all $i<\delta$, $p^{\alpha_i}\restriction M_i=p_i$. If each $p_i$ does not fork over $M_0$, then neither does $p$.
\item (Symmetry) If $\fk{a_2}{M_0}{M_1}$ and $a_1\in|M_1|^{<\alpha}$, then there is $M_2$ containing $a_2$ such that $\fk{a_1}{M_0}{M_2}$.
\end{enumerate}
We define $(<\alpha,\mu)$-frame similarly when the models must have size $\mu$. We omit ``$(<\alpha)$'' when $\alpha=2$. We call $\fork$ an independence relation if it only has invariance and monotonicity.
\end{definition}
\begin{remark}
There are weaker versions of a good frame which still have nice properties (for example \cite{JS13,mw,leung3}), which will not be discussed here because we will focus on the full strength of a good frame (and more under categoricity).
\end{remark}

\section{A $(<\infty,\geq(2^{\ls})^+)$-nonforking relation}\label{apsec3}
Assuming superstability and shortness, we will build a $(<\infty,\geq(2^{\ls})^+)$-nonforking relation with nice properties. This will allow us to use \cite[Section 11]{s6} to conclude that the underneath good frame is weakly successful. The result was sketched in \cite[Lemma A.14]{s8} but it drew technical results from \cite[Sections 1-10]{s6}. In this section, we will construct the nonforking relation and derive its properties directly. Readers can blackbox this section and skip to Section \ref{apsec4}.

\begin{definition}Let ${\bf K}$ be an AEC with a monster model, $\lambda\geq\ls$.
\begin{enumerate}
\item Let $M\lk N$, we say that $N$ is an \emph{universal extension of $M$} if for any $N'\in K_{\nr{M}}$ with $M\lk N'$, there is $f:N'\xrightarrow[M]{}N$. We say a chain $\langle M_i\in K_\lambda:i\leq\delta\rangle$ is \emph{universally increasing} if for each $i<\delta$, $M_{i+1}$ is a universal extension of $M_i$.
\item Let $N\in K$ and $p\in\gs(N)$, we say that $p$ $\lambda$-splits over $M$ if there exists $N_1,N_2\in K_\lambda$ such that $M\lk N_1,N_2\lk N$, $f:N_1\xrightarrow[M]{}N_2$ with $f(p)\restriction N_2\neq p\restriction N_2$.
\item ${\bf K}$ is \emph{superstable in $\lambda$} if ${\bf K}$ is stable in $\lambda$ and the following holds: for any limit ordinal $\delta<\lambda^+$, any universally increasing and continuous $\langle M_i\in K_\lambda:i\leq\delta\rangle$, $p\in\gs(M_\delta)$, there is $i<\delta$ such that $p$ does not $\lambda$-split over $M_i$.
\end{enumerate}
\end{definition}
\begin{remark}\mylabel{unisatrmk}{Remark \thetheorem}
In item (1), if $M\lk N$ and $N$ is $\nr{M}^+$-saturated, then $N$ is a universal extension over $M$. In addition, $N$ realizes all $(<\nr{M}^+)$-types over $M$. In item (3), by \cite[Proposition 10.10]{s6} or \cite[Corollary 6.11(1)]{leung3}, $\lambda$-tameness and $\lambda$-superstability imply $\lambda'$-superstability for $\lambda'\geq\lambda$.
\end{remark}
Under tameness and superstability, we can build a good frame in the successor cardinal. We remark that the original item (2) did not show whether the $\mu^+$-saturated models form an AEC (in particular whether they are closed under unions). It was only later in item (1) that the question was fully settled.
\begin{fact}\mylabel{s3fact}{Fact \thetheorem}
Let ${\bf K}$ be an AEC with a monster model and $\mu\geq\ls$. Suppose ${\bf K}$ is $\mu$-tame and superstable in $\mu$.
\begin{enumerate}
\item \cite[Corollary 6.10]{vv} For $\lambda>\mu$, ${\bf K}^{\lambda\text{-sat}}$ is an AEC with $\oop{LS}({\bf K}^{\lambda\text{-sat}})=\lambda$.
\item \cite[Theorem 7.1]{s3} The relation defined by: $p\in\gs(N)$ does not fork over $M\leq N$ if there is $M_0\in K_\mu$ such that $M$ is a universal extension over $M_0$ and $p$ does not $\mu$-split over $M_0$, induces a good $(\geq\mu^+)$-frame for ${\bf K}^{\mu^+\text{-sat}}$ (by (1) the $\mu^+$-saturated models form a sub-AEC of ${\bf K}$).
\end{enumerate}
\end{fact}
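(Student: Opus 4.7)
My overall strategy is to handle the two parts separately: part (1) is a chain-closure theorem for the subclass of $\lambda$-saturated models, while part (2) is an axiom-by-axiom verification of the good frame.

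For part (1), the sole nontrivial AEC axiom to check is that ${\bf K}^{\lambda\text{-sat}}$ is closed under increasing unions. Given a chain $\langle M_i : i < \delta\rangle$ of $\lambda$-saturated members with union $M_\delta$ and a type $p \in \gs(A)$ over $A \subseteq |M_\delta|$ with $\card{A} < \lambda$, I want to realize $p$ inside $M_\delta$. The plan is to find a $\mu$-sized $M_0$ over which $p$ does not $\mu$-split, together with a $\mu$-universal extension $N_0 \supseteq M_0$ sitting inside some $M_i$ with $A \subseteq |N_0|$; the existence of such $M_0$ is exactly the local character from superstability in $\mu$ (after passing to a cofinal $\mu$-sized universal sub-chain), and the upgrade from $\mu$-sized domains to $A$ is provided by $\mu$-tameness. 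Then $\lambda$-saturation of $M_i$ realizes $p \restriction N_0$, and $\mu$-tameness together with non-$\mu$-splitting promotes this realization to one of $p$ itself. The computation $\oop{LS}({\bf K}^{\lambda\text{-sat}}) = \lambda$ is then a standard chain-of-length-$\lambda$ argument realizing all $(<\lambda)$-types at each step.

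For part (2), invariance, monotonicity, and existence are immediate from the definition. Stability in each $\lambda \geq \mu^+$ follows from $\mu$-stability combined with $\mu$-tameness by counting types over $\mu$-sized subsets. Extension is obtained by taking the $\mu$-sized witness $M_0$ and extending through a chain of universal extensions inside any larger $\mu^+$-saturated $M_2$; uniqueness is the key place where $\mu$-tameness enters, since two nonforking extensions of a type that agree on $M_0$ must agree on every $\mu$-sized subset of the larger domain and hence everywhere. Transitivity follows by concatenating the universal chain witnessing non-$\mu$-splitting. Local character at $(\geq\mu^+)$-chains reduces to the $\mu$-version by extracting a cofinal $\mu$-sized universal sub-chain, and continuity is then a formal consequence of local character plus uniqueness.

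The main obstacle I expect is symmetry, since the definition via $\mu$-splitting is inherently asymmetric. My plan is to deduce symmetry from the uniqueness of $(\geq\mu^+)$-limit models of long cofinality, which is itself a known consequence of superstability and $\mu$-tameness: if symmetry failed, one could build two non-isomorphic limit models over a common base by iterating nonforking extensions in the two possible orders, contradicting uniqueness. An alternative route, which I would use as a backup if the limit-model argument proved too delicate for $(<\infty)$-types, is to identify the splitting-based relation on sufficiently saturated models with a canonical symmetric relation such as coheir, and then transfer symmetry from that relation.
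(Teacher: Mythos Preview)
The paper does not give its own proof of this statement: it is recorded as a \emph{Fact} and simply cites \cite[Corollary 6.10]{vv} for (1) and \cite[Theorem 7.1]{s3} for (2). So there is nothing in the paper to compare your proposal against; what you have written is essentially a sketch of how the proofs go in those external references.

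As such a sketch, your outline is broadly on target and follows the same architecture as the cited sources: closure under unions via non-$\mu$-splitting plus tameness for (1), and an axiom-by-axiom check with symmetry derived from uniqueness of limit models for (2). Two places deserve more care if you intend this as a self-contained proof. First, in (1) the chain $\langle M_i\rangle$ need not have cofinality $\leq\mu$, so ``passing to a cofinal $\mu$-sized universal sub-chain'' is not literally what happens; the actual argument in \cite{vv} works inside a single $M_i$ once $A$ lands there, building the $\mu$-resolution internally. Second, your symmetry plan is the right one (and is what \cite{s3} does, relying on \cite{vanuq} for uniqueness of limit models), but the reduction ``failure of symmetry $\Rightarrow$ two non-isomorphic limit models'' is the genuinely hard step and is not as simple as iterating nonforking extensions in two orders; it goes through an intermediate reduced-tower argument. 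Your backup route via coheir would require additional hypotheses (no weak order property, extension for coheir) that are not assumed here, so it is not a viable alternative in this setting.
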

\begin{remark}
\begin{itemize}
\item Coheir in \cite{BG} is another candidate for a good frame, but one has to assume in addition the no weak order property and the extension property of coheir. To remove these assumptions, one has to raise the starting cardinal very high, so the threshold cardinal of categoricity transfer is way above $\mu^+$. See also item 2(a) after this remark.
\item One might wonder if it is possible to define the frame for $K_\mu$. \cite[Corollary 13.16]{snote} gave a weaker version where the underlying models are limit models while local character and continuity are for universally increasing chains (this argument was generalized to the strictly stable context in \cite{leung3}). Alternatively, \cite[Section 6]{s31} built a good $\mu$-frame by assuming WGCH and drawing heavily from  \cite{JS13} (WGCH is used to establish that the frame is weakly successful, see \ref{wsdef}). 
\end{itemize}
\end{remark}
Now we have a good $(\geq\mu)$-frame and would like to extend it to longer types. However, there are difficulties in terms of proving extension and local character. Besides the use of WGCH as in the above remark, we list three main approaches in literature:
\begin{enumerate}
\item Using independent sequences and tameness, \cite{bv} developed on \cite[Exercise III.9.4.1]{shh} to extend the frame to longer types. But such frame is not necessarily type-full, which is assumed in other results.
\item Extend the good $(\geq\mu)$-frame to a $(<\infty,\geq\mu)$-nonforking relation, which might not be a good frame itself. \cite[Section 11]{s6} gave sufficient conditions of the nonforking relation in order for the original frame to be weakly successful. Then one can quote \cite{JS13} to extend the original frame by $\oop{NF}$, which is a good frame. To build the nonforking relation, there are two ways:
\begin{enumerate}
\item \cite[Sections 1-10]{s6} built an axiomatic framework that allows one to use coheir to produce a good $(\geq\mu)$-frame (instead of using nonsplitting). To obtain the sufficient conditions above, he went on with a highly convoluted construction, which also uses canonicity to obtain properties from nonsplitting. Moreover, the threshold cardinal $\mu$ is very high (fixed points of the beth function) in order to use the no-order property. 
\item Using nonsplitting (\ref{s3fact}), \cite[Lemma A.14]{s8} sketched that it can be extended to a nonforking relation that satisfies the sufficient conditions. However, the details were sparse (about two paragraphs) and he invoked technical results from \cite[Sections 1-10]{s6}, which have numerous definitions and go back and forth between coheir and nonsplitting. 
\end{enumerate}
\end{enumerate}
We will adopt approach 2(b), but give an alternative proof that such nonforking relation satisfies the desired properties. In particular we do not need \cite{s6} in this section but refer to the simple construction in \ref{s3fact}(2). Our starting cardinal is $\mu^+$ for the same reason as the successor cardinal in \ref{s3fact}(2). Meanwhile \cite[Lemma A.14]{s8} starts at $\mu$, but we cannot verify the claims there. At the end it does not affect the categoricity transfer by virtue of \ref{lastfact}(2).
\begin{definition}\mylabel{forkccon}{Definition \thetheorem}
Let ${\bf K}$ be an AEC with a monster model, $\mu=2^{\ls}$ and assume ${\bf K}$ is $\ls$-short and superstable in $\ls$.
\begin{enumerate}
\item Since shortness implies tameness (\ref{shortremark}), we can define the nonforking relation as in \ref{s3fact}(2) but for $<({\ls}^+)$-types (instead of 1-types).  This is a $(<{\ls}^+,\geq\mu^+)$-nonforking relation $\fork$ over the $\mu^+$-saturated models.
\item Extend $\fork$ to a $(<\infty,\geq\mu^+)$-nonforking relation $\bar{\fork}$ by coheir : $\fkc{a}{M_0}{M_1}$ iff for any subsequence $a'\subseteq a$ of length $<{\ls}^+$, we have $\fk{a'}{M_0}{M_1}$.
\end{enumerate}
\end{definition}

The following collection of facts helps us establish local character properties. The second item below is from \cite[Theorem 3.5]{bon3.1}, which was usually cited as \cite[Theorem 3.1]{bon3.1} (the issue was clarified in \cite[Theorem 2.2]{leung1}). The statement of the third item can be found in \cite[Lemma A.12]{s8} and is essentially \cite[Fact 4.6]{GV06b}.
\begin{fact} \mylabel{longfact}{Fact \thetheorem} Let ${\bf K}$ be an AEC with a monster model, $\mu\geq\ls$ and $\alpha\geq1$.
\begin{enumerate}
\item \cite[Lemma 3.3]{sh394} If ${\bf K}$ is stable in $\mu$, $M\in K_{\geq\mu}$ and $p\in\gs(M)$, then there is $M_0\lk M$, $\nr{M}=\mu$ such that $p$ does not $\mu$-split over $M_0$. 
\item If ${\bf K}$ is stable in $\mu$ and $\mu=\mu^\alpha$, then it is $\alpha$-stable in $\mu$.
\item If $\kappa$ satisfies $\mu=\mu^{<\kappa}$, then item (1) is still true for $p\in\gs^{<\kappa}(M)$.
\end{enumerate}
\end{fact}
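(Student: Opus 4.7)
My plan for Fact \ref{longfact} is to address the three items in sequence, since they share the common theme of leveraging stability in $\mu$ to control either splitting or counting of types.

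For item (1), I would proceed by contradiction along the lines of the classical Shelah argument from \cite{sh394}. Assume $p\in\gs(M)$ $\mu$-splits over every $M_0\lk M$ in $K_\mu$. Iterating this assumption, I would construct by induction on $\eta\in 2^{<\mu^+}$ a tree of submodels $M_\eta\lk M$ with $\nr{M_\eta}=\mu$, together with splitting witnesses $N_\eta^1,N_\eta^2\in K_\mu$ sitting inside $M_{\eta^\frown 0}$ and $M_{\eta^\frown 1}$, and an isomorphism $f_\eta:N_\eta^1\cong_{M_\eta}N_\eta^2$ with $f_\eta(p\restriction N_\eta^1)\neq p\restriction N_\eta^2$. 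Using $AP$ inside $\mn$, one transports the two incompatible branches at each level into a common submodel; composing these transports along $\mu$ many levels yields $2^\mu$ pairwise distinct Galois types over a single model of size $\mu$, which contradicts stability in $\mu$.

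For item (2), I would give a direct counting argument. Fix $M\in K_\mu$. Under $AP$, the Galois type of an $\alpha$-tuple $\langle a_i:i<\alpha\rangle$ over $M$ is determined by the sequence of $1$-types $\langle \gtp(a_i/M\cup\{a_j:j<i\}):i<\alpha\rangle$. For each $i<\alpha$ the set $M\cup\{a_j:j<i\}$ has cardinality at most $\mu+|i|\leq\mu$ (since $\mu=\mu^\alpha$ forces $|\alpha|\leq\mu$), so stability in $\mu$ applied to set-based $1$-types bounds each factor by $\mu$. The total number of such sequences is at most $\mu^\alpha=\mu$, giving $|\gs^\alpha(M)|\leq\mu$ as required.

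For item (3), I would combine the previous two. Using $\mu=\mu^{<\kappa}$ and applying (2) with $\alpha=\beta$ for every $\beta<\kappa$ gives $(<\kappa)$-stability in $\mu$, i.e.\ $|\gs^{<\kappa}(A)|\leq\mu$ whenever $|A|\leq\mu$. The natural generalization of $\mu$-splitting to long types makes sense verbatim, and then the same tree construction as in item (1)---now producing more than $\mu$ distinct $(<\kappa)$-types over a fixed submodel of size $\mu$---contradicts $(<\kappa)$-stability. The main technical obstacle I anticipate is the bookkeeping in the tree construction for (1): one must take care that the $2^\mu$ branches really produce distinct Galois types over a \emph{single} ambient model of size $\mu$ rather than over an unbounded union, which is precisely where the $AP$ of $\mn$ is used to transport and amalgamate the splitting witnesses at each stage.
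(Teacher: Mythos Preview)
Your approach to items (1) and (3) matches the paper's exactly: the paper cites \cite{sh394} for (1) and, for (3), says to use (2) to get $(<\kappa)$-stability in $\mu$ and then rerun the tree argument from (1) with $(<\kappa)$-types in place of $1$-types. Your remark about the bookkeeping in (1)---making sure the $2^\mu$ branches land over a single model of size $\mu$---is the right thing to flag, and is handled by working inside $\mn$ throughout.

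Your argument for item (2), however, has a genuine gap. The claim that $\gtp(\bar a/M)$ is determined by the sequence $\langle \gtp(a_i/M\cup\{a_j:j<i\}):i<\alpha\rangle$ is not well-posed: for distinct realizations $\bar a,\bar b$ the $i$-th entries of these sequences are types over \emph{different} domains $Ma_{<i}$ and $Mb_{<i}$, so ``same sequence'' has no invariant meaning. If one unwinds the intended inductive counting, the successor step is fine (fix a representative of each $i$-type and count extensions), but at a limit ordinal $\delta\leq\alpha$ you are implicitly using that $p\in\gs^\delta(M)$ is determined by $\langle p^i:i<\delta\rangle$. This is exactly a shortness/continuity property that is \emph{not} assumed in the statement of the Fact, and it can fail in a general AEC with only $AP$.

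The paper simply cites Boney \cite{bon3.1} for (2), but the clean proof that avoids the limit-stage issue is via universal extensions: by stability in $\mu$ there is $N\geq M$ with $\nr{N}=\mu$ that is universal over $M$; every $(\leq\mu)$-type over $M$ is realized in some $M'\geq M$ of size $\mu$, hence (embedding $M'$ into $N$ over $M$) is realized in $N$; so $|\gs^\alpha(M)|\leq\nr{N}^{|\alpha|}=\mu^\alpha=\mu$. This argument needs no coherence at limits and no shortness. Once (2) is established this way, your plan for (3) goes through verbatim.
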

\begin{proof}
We sketch (3): by stability and (2), ${\bf K}$ is $(<\kappa)$-stable in $\mu$. The proof of (1) shows that if the conclusion of (1) fails, one can build a tree of types and models to contradict 1-stability in $\mu$, where ``1'' comes from $l(p)$. The same proof goes through for (3) because we now have $(<\kappa)$-stability in $\mu$.
\end{proof}

We now state the nice properties of $\bar{\fork}$ we constructed. Items (c) and (d) can be strengthened but they are sufficient for the next section. Notice that shortness is the key to obtain uniqueness in item (e) below.
\begin{proposition}\mylabel{forkcsum}{Proposition \thetheorem}
Let ${\bf K}$ be an AEC with a monster model, $\mu=2^{\ls}$ and assume ${\bf K}$ is $\ls$-short and $\ls$-superstable. The relation $\bar{\fork}$ defined in \ref{forkccon} satisfies the following:
\begin{enumerate}
\renewcommand{\labelenumi}{\alph{enumi}.}
\item $\bar{\fork}$ is a $(<\infty,\geq\mu^+)$-nonforking relation over the $\mu^+$-saturated models.
\item When restricted to 1-types, $\bar{\fork}$ is a good $(\geq\mu^+)$-frame.
\item For $n\geq2$, ${\bf K}^{\mu^{+n}\text{-sat}}$ is an AEC with $\oop{LS}({\bf K}^{\mu^{+n}\text{-sat}})=\mu^{+n}$. 
\item For $n\geq2$, $\bar{\fork}$ restricted to $(\leq\mu^{+n})$-types has local character for chains of length $\geq\mu^{+(n+1)}$. Namely, for any $a$ of length $(\leq\mu^{+n})$, any regular $\delta\geq\mu^{+(n+1)}$, any increasing and continuous chain $\langle M_i:i\leq\delta\rangle\subseteq K^{\mu^+\text{-sat}}$, there is $i<\delta$ such that $\fkc{a}{M_i}{M_\delta}$.
\item $\bar{\fork}$ has uniqueness.
\item $\bar{\fork}$ has the left $(\leq\mu^+)$-witness property: $\fkc{a}{M_0}{M_1}$ iff for any $a'\subseteq a$ of length $\leq\mu^+$, we have $\fkc{a'}{M_0}{M_1}$.
\item $\bar{\fork}$ has the right $(\leq\mu^+)$-model witness property:  $\fkc{a}{M_0}{M}$ iff for any $M_1\in K^{\mu^+\text{-sat}}$ with $M_0\lk M_1\lk M$, $\nr{M_1}\leq\mu^+$, we have $\fkc{a}{M_0}{M_1}$.
\end{enumerate}
\end{proposition}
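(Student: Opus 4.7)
My overall strategy is to handle the seven clauses modularly, exploiting the definition of $\bar{\fork}$ as the coheir over $<\ls^+$-length subsequences of the underlying relation $\fork$ from \ref{forkccon}(1). Clauses (a) and (f) reduce to unpacking: invariance and monotonicity of $\bar{\fork}$ descend from those of $\fork$ (inherited from the good frame of \ref{s3fact}(2) extended to $<\ls^+$-types), and the left $(\leq\mu^+)$-witness property collapses because every $<\ls^+$-subsequence of $a$ is itself a $<\ls^+$-subsequence of any $\leq\mu^+$-subsequence containing it. For (b), $\bar{\fork}$ agrees with $\fork$ on 1-types, and the latter is the good $(\geq\ls^+)$-frame of \ref{s3fact}(2) (applicable since $\ls$-shortness implies $\ls$-tameness by \ref{shortremark}, combined with $\ls$-superstability); restricting its domain to $\mu^+$-saturated models of size $\geq\mu^+$ preserves every frame axiom. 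For (c), apply \ref{s3fact}(1) with $\lambda = \mu^{+n}$, valid because $\mu^{+n}>\ls$ for $n\geq 1$.

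For uniqueness (e), let $p, q \in \gs^{<\infty}(M_1)$ both be $\bar{\fork}$-nonforking over $M_0$ and agree on $M_0$. By $\ls$-shortness (\ref{shortdef}(3)) it suffices to show $p^I = q^I$ for every $I$ of size $\leq \ls$; each such restriction is a $<\ls^+$-type $\fork$-nonforking over $M_0$ and matching its counterpart on $M_0$. Unwinding the $\fork$-definition, both $p^I, q^I$ do not $\ls$-split over some size-$\ls$ submodels of $M_0$; passing to a common refinement $M_0' \lk M_0$ of size $\ls$ (nonsplitting is preserved under enlarging the base) and invoking universality of $M_0$ over $M_0'$, the standard nonsplitting canonicity argument forces $p^I = q^I$.

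For local character (d), fix $a$ of length $\alpha \leq \mu^{+n}$, regular $\delta \geq \mu^{+(n+1)}$, and an increasing continuous chain $\langle M_i : i \leq \delta\rangle \subseteq {\bf K}^{\mu^+\text{-sat}}$. For each $<\ls^+$-subsequence $a' \subseteq a$, local character of $\fork$ on $<\ls^+$-types --- derived from the 1-type good frame by running the argument coordinatewise, aggregating the size-$\ls$ witnesses via $\mu^+$-saturation, and lifting coordinatewise nonsplitting to tuple nonsplitting via $\ls$-shortness --- yields $i_{a'} < \delta$ with $\fk{a'}{M_{i_{a'}}}{M_\delta}$. The total number of such $a'$ is $\leq (\mu^{+n})^{\ls} = \mu^{+n} < \mu^{+(n+1)} \leq \delta$, so by regularity $i \defeq \sup_{a'} i_{a'} < \delta$; monotonicity then produces $\fk{a'}{M_i}{M_\delta}$ for every $a'$, hence $\fkc{a}{M_i}{M_\delta}$.

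The main obstacle is the right model witness property (g). The forward direction is monotonicity; for the reverse, by (f) it suffices to prove $\fk{a'}{M_0}{M}$ for each fixed $<\ls^+$-subsequence $a'$. Pick a $\mu^+$-saturated $M_1 \lk M$ of size $\mu^+$ with $M_0 \lk M_1$; the hypothesis supplies a size-$\ls$ witness $M_0' \lk M_0$ over which $\gtp(a'/M_1)$ does not $\ls$-split, with $M_0$ universal over $M_0'$. I claim $M_0'$ witnesses nonforking all the way to $M$: given any putative splitting configuration $(N_1, N_2, f)$ of size-$\ls$ submodels of $M$ with $f$ fixing $M_0'$, absorb $M_0 \cup N_1 \cup N_2$ into a $\mu^+$-saturated $M_1^\ast \lk M$ of size $\mu^+$ (via \ref{s3fact}(1) and the L\"owenheim--Skolem axiom of the sub-AEC), reapply the hypothesis to obtain a size-$\ls$ witness $M_0^\ast \lk M_0$ for $\gtp(a'/M_1^\ast)$, and take a size-$\ls$ common refinement $M_0^+ \supseteq M_0' \cup M_0^\ast$ inside $M_0$; both $\gtp(a'/M_1)$ and $\gtp(a'/M_1^\ast)$ fail to $\ls$-split over $M_0^+$, so uniqueness (e) applied at the $<\ls^+$-type level identifies $\gtp(a'/M_1^\ast)$ as the canonical nonsplit extension of $\gtp(a'/M_1)$ over $M_0'$, refuting the postulated splitting. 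Hence $M_0'$ witnesses $\fk{a'}{M_0}{M}$, completing (g).
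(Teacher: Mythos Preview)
Your argument for (d) has a genuine gap at the step ``lifting coordinatewise nonsplitting to tuple nonsplitting via $\ls$-shortness.'' Even if every singleton coordinate of $a'$ has a type that does not split over a common base $M^*$, the \emph{joint} type $\gtp(a'/M_\delta)$ need not be nonsplit over $M^*$: splitting is a statement about the behavior of the full tuple under isomorphisms over the base, and marginal invariance does not imply joint invariance. Shortness does not rescue this either, since $\ls$-shortness only says that a type is determined by its restrictions to index sets of size $\leq\ls$; for $a'$ already of length $<\ls^+$ this is vacuous, and in any case it does not convert nonsplitting of subtuples into nonsplitting of the whole. So your derivation of local character of $\fork$ on $<\ls^+$-types from the $1$-type frame does not go through.

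This is precisely why the paper takes $\mu=2^{\ls}$ and works with $\mu$-splitting over $\mu^+$-saturated models (note your clause (b) speaks of the ``good $(\geq\ls^+)$-frame,'' but the relation of \ref{forkccon} lives over $\mu^+$-saturated models). The cardinal arithmetic $\mu=\mu^{<\ls^+}$ lets one invoke \ref{longfact}(3) to obtain, for any $<\ls^+$-type over $M_\delta$, a single $M^*\lk M_\delta$ of size $\mu$ over which the whole type does not $\mu$-split --- no coordinatewise aggregation needed. Then $\delta\geq\mu^{+(n+1)}>\mu$ puts $M^*$ inside some $M_i$, and $\mu^+$-saturation of $M_i$ makes it universal over $M^*$, yielding $\fk{a'}{M_i}{M_\delta}$. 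Your counting of the $(\mu^{+n})^{\ls}=\mu^{+n}$ many subsequences is then exactly the paper's final step.

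The same device drives (g). The paper applies \ref{longfact}(3) once to $\gtp(a'/M)$ to get a global size-$\mu$ witness $M^*\lk M$, picks a $\mu^+$-saturated $N_0$ with $M_0\lk N_0\lk M$ universal over $M^*$ (so $\gtp(a'/M)$ does not fork over $N_0$), applies the hypothesis at $N_0$, and finishes with transitivity of $\fork$. Your absorb-and-refine maneuver is more fragile: the witness $M_0'$ obtained for one $M_1$ need not serve for $M_1^*$, and the uniqueness you invoke at the end only matches types that are already known to be nonsplit over the \emph{same} base --- it does not transport nonsplitting from $M_0^+$ back down to the original $M_0'$.
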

\begin{proof}
Items (a) and (b) follow from the construction of $\bar{\fork}$ which extends the original frame. Item (c) is by \ref{s3fact}(1). 

For item (d), we first assume that $a$ has length $<{\ls}^+$. Since $\mu=\mu^{<{\ls}^+}$, by \ref{longfact}(3) there is $M^*\lk M_\delta$, $\nr{M^*}=\mu$ such that $\gtp(a/M_\delta)$ does not $\mu$-split over $M^*$. Since $\delta\geq\mu^{+n}>\mu$, there is $i<\delta$ such that $M^*\lk M_i$. Since $M_i$ is $\nr{M^*}^+$-saturated, by \ref{unisatrmk} $M_i$ is universal over $M^*$. By definition, $\fk{a}{M_i}{M_\delta}$ as desired. Now for general $a$ of length $(\leq\mu^{+n})$, there are at most $(\mu^{+n})^{\ls}$, which is $\mu^{+n}$ many subsequences of length $<{\ls}^+$, therefore we can take the maximum $i$ from the previous case, which is still less than $\delta$ by a cofinality argument.

For item (e), let $M\lk N\in K^{\mu^+\text{-sat}}$, $p,q\in \gs^{<\infty}(N)$ both do not fork over $M$ and $p\restriction M=q\restriction M$. By shortness we may assume that $p,q\in\gs^{<{\ls}^+}(N)$. Then the uniqueness proof for the case of 1-types in \ref{s3fact}(2) goes through, because it uses universal extensions only and our types $p,q$ have length $<{\ls}^+$ less than the sizes of the models.  

Item (f) is true by coheir in the construction, in particular we have $(\leq\ls)$-witness property which is stronger. We show the backward direction of item (g): by coheir and monotonicity, it suffices to consider the case $l(a)<{\ls}^+$. By \ref{longfact}(3), there is $M^*\lk M$, $\nr{M^*}=\mu$ such that $\gtp(a/M)$ does not $\mu$-split over $M^*$. Pick $N_0\in K^{\mu^+\text{-sat}}$ such that $M_0\lk N_0\lk M$ and $N_0$ is a universal extension over $M^*$. By definition, $\gtp(a/M)$ does not fork over $N_0$. Since $\nr{N_0}=\mu^+$, by assumption $\gtp(a/N_0)$ does not fork over $M_0$. Now we can quote the transitivity proof for the case of 1-types in \ref{s3fact}(2), which generalizes to $<{\ls}^+$-types for the same reason as in the previous paragraph. Thus we have $\gtp(a/M)$ does not fork over $M_0$ as desired.
\end{proof}
\section{A weakly successful frame}\label{apsec4}
By \ref{forkcsum}, we will show that the nonforking relation in \ref{forkccon} satisfies \cite[Hypothesis 11.1]{s6}. This allows us to quote results from \cite[Sections 11, 12]{s6} and conclude that the underlying good $(\geq(2^{\ls})^+)$-frame is weakly successful, can be extended by $\oop{NF}$, is $\omega$-successful and has full model continuity (in the third successor cardinal). This will allow us to do categoricity transfer in Section \ref{apsec6}. On the other hand, we compare our extended frame with the results in \cite[Section 15]{s6}, which was constructed from coheir (instead of nonsplitting). 
\begin{proposition}\mylabel{checkprop}{Proposition \thetheorem}
Let ${\bf K}$ be an AEC with a monster model, $\mu=2^{\ls}$ and assume ${\bf K}$ is $\ls$-short and $\ls$-superstable. The relation $\bar{\fork}$ defined in \ref{forkccon} satisfies \cite[Hypothesis 11.1]{s6}.
\end{proposition}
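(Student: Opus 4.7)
The plan is to check, clause by clause, each condition of \cite[Hypothesis 11.1]{s6} using Proposition \ref{forkcsum} as the main tool. Hypothesis 11.1 fixes an AEC with a monster model in which a $(<\infty,\geq\mu^+)$-independence relation extends an underlying good $(\geq\mu^+)$-frame, and it imposes a collection of further clauses: invariance/monotonicity, the restriction to $1$-types is a good frame, uniqueness for long types, a form of long local character for chains of sufficiently large cofinality, and left/right $(\leq\mu^+)$-witness properties.

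First I would set up the framework. The monster model of ${\bf K}$ is given by assumption, and Fact \ref{s3fact}(1) promotes ${\bf K}^{\mu^+\text{-sat}}$ to an AEC with $\operatorname{LS}({\bf K}^{\mu^+\text{-sat}})=\mu^+$; this is the ambient class over which $\bar{\fork}$ lives. Then \ref{forkcsum}(a,b) immediately deliver that $\bar{\fork}$ is a $(<\infty,\geq\mu^+)$-nonforking relation whose restriction to $1$-types is a good $(\geq\mu^+)$-frame. This takes care of invariance, monotonicity, existence, stability, transitivity, symmetry, and the $1$-type versions of extension and local character in one stroke.

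Next I would match the remaining clauses of Hypothesis 11.1 to the corresponding items of \ref{forkcsum}: uniqueness for long types is \ref{forkcsum}(e); the left $(\leq\mu^+)$-witness property is \ref{forkcsum}(f); the right $(\leq\mu^+)$-model-witness property is \ref{forkcsum}(g); and the long local character demand for $(\leq\mu^{+n})$-types along chains of length $\geq\mu^{+(n+1)}$ is precisely \ref{forkcsum}(d). Where Hypothesis 11.1 demands extension for long types, I would reduce via the coheir-style definition in \ref{forkccon}(2) to subsequences of length $<{\ls}^+$, apply the extension clause of the $1$-type frame tuple-by-tuple to get candidate extensions on a universal model, and then use the uniqueness of \ref{forkcsum}(e) to verify that the resulting long type is coherent and independent.

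The main obstacle I expect is bookkeeping rather than mathematics: \cite[Hypothesis 11.1]{s6} is phrased in Vasey's axiomatic vocabulary for independence relations, with precise conventions about ``full'' extension, normalizations over sets versus over models, and named witness properties, and some clauses may be formulated slightly differently from \ref{forkcsum}. So the real work is a careful line-by-line dictionary between the two formulations, supplemented by short arguments whenever a clause of 11.1 is strictly weaker than (or a direct consequence of) the corresponding statement in \ref{forkcsum}. Since \ref{forkcsum} was tailored precisely for this verification, I anticipate no missing ingredient, only translation.
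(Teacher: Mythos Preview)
Your approach is essentially the paper's: verify \cite[Hypothesis 11.1]{s6} clause by clause by quoting the corresponding items of \ref{forkcsum}. Two small corrections: first, the paper applies Hypothesis 11.1 with the parameter substitution ``$\lambda$''$\mapsto\mu^{++}$ and ``$\mu$''$\mapsto\mu^{+}$ (this is why \ref{forkcsum}(c)(d) are stated for $n\geq 2$, and why one clause of the hypothesis reduces to the triviality $\mu^{++}>\mu^{+}$); second, Hypothesis 11.1 does \emph{not} demand extension for long types, so your proposed coheir-reduction argument for long extension is unnecessary---the six clauses are covered exactly by \ref{forkcsum}(a), (b), the cardinal inequality, (c)(d), base monotonicity plus (e), and (f)(g).
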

\begin{proof}
The hypothesis is a list of requirements on the nonforking relation $\bar{\fork}$. By substituting ``$\lambda$'' and ``$\mu$'' there by $\mu^{++}$ and $\mu^+$ respectively. We check the items in the same numbering as in the hypothesis.
\begin{enumerate}
\item This is exactly \ref{forkcsum}(a). There they use the term ``independence relation'' to allow the right hand side of $\bar{\fork}$ to be sets (instead of models), which is just a generalization and does not affect the rest of the proof.
\item This is \ref{forkcsum}(b).
\item By the substitution above, clearly $\mu^{++}>\mu^+$.
\item This is \ref{forkcsum}(c)(d).
\item Base monotonicity is built in our definition of nonforking relation. Uniqueness is by \ref{forkcsum}(e).
\item This is \ref{forkcsum}(f)(g).
\end{enumerate}
\end{proof}
Under \cite[Hypothesis 11.1]{s6}, Vasey imitated the proofs in \cite{ms90} and showed that the underlying good $(\geq\mu^{++})$-frame has domination triples (see \ref{domdef}). Then he connected domination triples with uniqueness triples, which allowed him to conclude that the frame is weakly successful. In the following we state the relevant definitions and results.

The term ``domination triples'' came from the later \cite[Definition A.17]{s8} and \cite[Definition 2.9]{s12} even though \cite[Definition 11.5]{s6} had already investigated the idea of domination.
\begin{definition}\mylabel{domdef}{Definition \thetheorem}
Let $\lambda>\ls$ and $\fork$ be a $(<\infty,\geq\lambda)$-nonforking relation over the $\lambda$-saturated models. 
\begin{enumerate}
\item A triple $(a,M,N)$ is a \emph{domination triple} if $M\lk N$ both $\lambda$-saturated, $a\in|N|\backslash|M|$ and for any $\lambda$-saturated $N'$, $\fk{a}{M}{N'}$ implies $\fk{N}{M}{N'}$.
\item $\fork$ has the \emph{$\lambda$-existence property for domination triples} if for any $M$ saturated in $K_\lambda$, any nonalgebraic $p\in\gs(M)$, there exists a domination triple $(a,M,N)$ such that $p=\gtp(a/M;N)$.
\end{enumerate}
\end{definition}

The following fact \cite[Lemma 11.12]{s6} shows the existence property for domination triples. It will be applied to \ref{firsttrans} to show that the sufficiently saturated models have primes.

\begin{fact}\mylabel{domexist}{Fact \thetheorem}
In \ref{checkprop}, for $\lambda>\mu^+$, $\bar{\fork}$ has the $\lambda$-existence property for domination triples. 
\end{fact}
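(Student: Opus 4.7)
The plan is to adapt the Makkai--Shelah prime-model construction to our AEC setting. Fix a saturated $M \in K_\lambda$ and a nonalgebraic $p \in \gs(M)$; the goal is to produce a $\lambda$-saturated $N \in K_\lambda$ realizing $p$ by some $a \in |N|$ and verify that $(a, M, N)$ is a domination triple, using only the properties already established in \ref{forkcsum}.

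First I would realize $p$ by an element $a$ in the monster via the existence property of the underlying good $(\geq \mu^+)$-frame (\ref{forkcsum}(b)). Then I would construct $N$ as an increasing continuous union $N = \bigcup_{i < \lambda} N_i$ of $\mu^+$-saturated models of size $< \lambda$, with $Ma \subseteq N_0$. Proposition \ref{forkcsum}(c) guarantees that the union remains in $\mathbf{K}^{\mu^+\text{-sat}}$. A standard interleaving argument at successor stages arranges that $N$ realizes every Galois type over each of its submodels of size $<\lambda$, so that $N$ is $\lambda$-saturated; since both $M$ and $N$ are saturated of size $\lambda$, one automatically gets $M \lk N$.

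To check domination, let $N'$ be $\lambda$-saturated with $M \lk N'$ and $\fkc{a}{M}{N'}$. By the left $(\leq\mu^+)$-witness property (\ref{forkcsum}(f)), proving $\fkc{N}{M}{N'}$ reduces to showing $\fkc{b}{M}{N'}$ for every subtuple $b \subseteq N$ of length $\leq \mu^+$. Given such a $b$, I would realize a nonforking extension of $\gtp(b/Ma)$ over $MaN'$ by some $b^*$, using extension at the level of long types (derived from Definition \ref{forkccon} together with uniqueness in \ref{forkcsum}(e)). Then $\fkc{b^*}{Ma}{N'}$, and composing with $\fkc{a}{M}{N'}$ by a transitivity/concatenation argument yields $\fkc{b^*}{M}{N'}$. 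Since $\gtp(b/Ma) = \gtp(b^*/Ma)$, invariance of $\bar{\fork}$ gives $\fkc{b}{M}{N'}$, as required.

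The main obstacle will be the transitivity/concatenation step for tuples of length up to $\mu^+$: because $\bar{\fork}$ is defined through its short-subtuple coheir criterion, one has to derive long-type transitivity from the good-frame axioms for short types. This reduction is precisely where $\ls$-shortness is crucial, since every equality (or inequality) of types of length $\leq \mu^+$ is witnessed on a subtuple of length $< \ls^+$, and on such short types the uniqueness and local character properties from \ref{forkcsum} apply directly. A secondary concern is that the $N$ produced in the first step may need to be refined into a genuine ``prime-like'' extension over $Ma$, built at each successor stage by adding only realizations of nonforking extensions of types over $N_i$, in the spirit of \cite[Section 11]{s6}, to guarantee that the domination verification goes through without circularity.
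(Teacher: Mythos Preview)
The paper does not prove this statement; it simply cites \cite[Lemma 11.12]{s6} as a fact. So there is no ``paper's proof'' to compare against, and the relevant question is whether your sketch stands on its own.

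It does not, for two related reasons.

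First, the invariance step at the end is incorrect. You produce $b^*$ with $\gtp(b^*/Ma)=\gtp(b/Ma)$ and (granting everything else) $\fkc{b^*}{M}{N'}$. An automorphism witnessing the type equality fixes $Ma$, not $N'$; applying it gives $\fkc{b}{M}{\sigma(N')}$, not $\fkc{b}{M}{N'}$. This is exactly the obstruction that forces one to build $N$ carefully rather than verify domination after the fact. Relatedly, the expressions $\fkc{b^*}{Ma}{N'}$ and ``nonforking extension over $MaN'$'' are not available: in this framework $\bar\fork$ has $\mu^+$-saturated \emph{models} on the bottom and right, and $Ma$ is only a set.

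Second, your own ``secondary concern'' is the real content of the result. An arbitrary $\lambda$-saturated $N\supseteq Ma$ need not be dominated by $a$ over $M$; the Makkai--Shelah construction in \cite[Section 11]{s6} builds $N$ stage by stage so that each new element is added via a type that already does not fork over $M$ (through an intermediate model containing $a$), and the domination conclusion is proved inductively along that construction using local character (\ref{forkcsum}(d)) and the right model-witness property (\ref{forkcsum}(g)). Your proposal reverses the logical order: it builds $N$ first with no forking control and then tries to recover domination by an extension/invariance trick that, as above, cannot work. If you want a self-contained argument, you should start from the refined construction you allude to in your last sentence and make the inductive domination verification the centerpiece, not an afterthought.
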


Now we look at uniqueness triples and weak successfulness.

\begin{definition}\cite[Definition 11.4]{s6}.\mylabel{wsdef}{Definition \thetheorem}
Let $\lambda>\ls$ and $\fork$ be a good $\lambda$-frame over the saturated models in $K_\lambda$. Let $M_0\lk M_1$ and $M_0\lk M_2$ all $\lambda$-saturated.
\begin{enumerate}
\item An \emph{amalgam} of $M_1$ and $M_2$ over $M_0$ is a triple $(f_1,f_2,N)$ such that $N$ is $\lambda$-saturated, $f_i:M_i\xrightarrow[M_0]{}N$ for $i=1,2$.
\item Two amalgams $(f_1^a,f_2^a,N^a)$, $(f_1^b,f_2^b,N^b)$ of $M_1$ and $M_2$ over $M_0$ are \emph{equivalent} if there are $N\in K_\lambda^{\lambda\text{-sat}}$, $f^a:N^a\rightarrow N$ and $f^a:N^a\rightarrow N$ such that the following diagram commutes:
\begin{center}
\begin{tikzcd}
                                                         & N^b \arrow[r, "f^b", dotted]                  & N                            \\
M_1 \arrow[rr, "f_1^a\qquad\qquad"'] \arrow[ru, "f_1^b"] &                                               & N^a \arrow[u, "f^a", dotted] \\
M_0 \arrow[r] \arrow[u]                                  & M_2 \arrow[uu, "f_2^b"'] \arrow[ru, "f_2^a"'] &                             
\end{tikzcd}
\end{center}
\item A triple $(a,M,N)$ is a \emph{uniqueness triple} if $M,N$ are saturated models in $K_\lambda$, $a\in|N|\backslash|M|$ and for any $M_1$ saturated in $K_\lambda$, there exists an amalgam $(f_1,f_2,N_1)$ of $N$ and $M_1$ over $M$ such that $\gtp(f_1(a)/f_2[M_1];N_1)$ does not fork over $M$ and the amalgam is unique up to equivalence (see item (2)).
\item $\fork$ is \emph{weakly successful} if it has the \emph{existence property} for uniqueness triples: for any $M$ saturated in $K_\lambda$, any nonalgebraic $p\in\gs(M)$, we can find a uniqueness triple $(a,M,N)$ such that $p=\gtp(a/M;N)$.
\end{enumerate}
\end{definition}
The following fact translates \cite[Theorem 11.13]{s6} into our context.
\begin{fact}\mylabel{wsfact}{Fact \thetheorem}
Under \cite[Hypothesis 11.1]{s6}, the relation $\bar{\fork}$ defined in \ref{forkccon} (when restricted to 1-types and $\mu^{++}$-saturated models) induces a weakly successful good $\mu^{++}$-frame over the $\mu^{++}$-saturated models.
\end{fact}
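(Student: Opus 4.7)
The plan is to invoke \cite[Theorem 11.13]{s6} after the substitution $\lambda \mapsto \mu^{++}$, $\mu \mapsto \mu^+$: since Proposition \ref{checkprop} already verifies \cite[Hypothesis 11.1]{s6} in our setting, the result is immediate. The rest of this proposal describes how I would reproduce the argument. First, the good $\mu^{++}$-frame claim: by \ref{forkcsum}(a)(b), restricting $\bar{\fork}$ to 1-types gives a good $(\geq\mu^+)$-frame over the $\mu^+$-saturated models, and restricting further to $\mu^{++}$-saturated models preserves every frame axiom. Stability in $\mu^{++}$ follows from $\ls$-superstability and \ref{longfact}(2); local character is \ref{forkcsum}(d); uniqueness, transitivity, and symmetry transfer from $\bar{\fork}$; continuity and closure under directed unions come from \ref{forkcsum}(c), which gives that $K^{\mu^{++}\text{-sat}}$ is an AEC.

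For weak successfulness, I would follow the Makkai-Shelah style argument of \cite[Section 11]{s6}. Fix $M \in K_{\mu^{++}}$ saturated and $p \in \gs(M)$ nonalgebraic. By Fact \ref{domexist} applied with $\lambda = \mu^{++}$, pick a domination triple $(a, M, N)$ with $p = \gtp(a/M; N)$. The plan is to show $(a, M, N)$ is automatically a uniqueness triple. Existence of an amalgam $(f_1, f_2, N_1)$ with $\gtp(f_1(a)/f_2[M_1]; N_1)$ nonforking over $M$ is immediate from the frame extension property together with \ref{forkcsum}(c), which lets us locate $N_1$ inside $K^{\mu^{++}\text{-sat}}$. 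For uniqueness up to equivalence, compare two such amalgams: frame uniqueness (\ref{forkcsum}(e)) produces an isomorphism aligning the images of $a$ over the images of $M_1$; then the domination property of $(a, M, N)$ upgrades this to an alignment of all of $N$; a final pushout inside a $\mu^{++}$-saturated ambient assembles the equivalence.

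The main obstacle will be the last step: extending the alignment of $a$ to all of $N$ via domination. One reduces questions about arbitrary tuples from $N$ to subtuples of length $<\ls^+$, where uniqueness and the definition of $\bar{\fork}$ apply cleanly; for this reduction the $(\leq\mu^+)$-witness and model witness properties \ref{forkcsum}(f)(g) are indispensable, as they let one test nonforking on short subtuples where \ref{forkcsum}(e) gives a sharp uniqueness statement. The remaining bookkeeping is analogous to \cite[Lemmas 11.8--11.12]{s6}; the only subtle point is to keep each intermediate model inside $K^{\mu^{++}\text{-sat}}$, which is again provided by \ref{forkcsum}(c).
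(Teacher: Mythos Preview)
Your opening sentence is exactly the paper's approach: this is stated as a \emph{Fact} with no proof, merely a translation of \cite[Theorem 11.13]{s6} into the present notation (the paper writes ``The following fact translates \cite[Theorem 11.13]{s6} into our context'' and gives no further argument). Your additional sketch of how that theorem is proved---via domination triples, witness properties, and the Makkai--Shelah style bookkeeping---goes beyond what the paper offers here; note only that invoking \ref{checkprop} is unnecessary at this point, since the statement of the Fact already assumes \cite[Hypothesis 11.1]{s6} (the paper defers the verification to the subsequent Corollary).
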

\begin{corollary}\mylabel{wscor}{Corollary \thetheorem}
Let ${\bf K}$ be an AEC with a monster model and $\mu=2^{\ls}$. Suppose ${\bf K}$ is $\ls$-short and superstable in $\ls$. Then the good $(\geq\mu^+)$-frame defined in \ref{s3fact}(2) induces a weakly successful good $\mu^{++}$-frame over the $\mu^{++}$-saturated models.
\end{corollary}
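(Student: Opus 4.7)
The plan is to chain the preceding results together in a direct manner. First, I would invoke \ref{forkccon} to construct the $(<\infty,\geq\mu^+)$-nonforking relation $\bar{\fork}$ that extends the good $(\geq\mu^+)$-frame from \ref{s3fact}(2). The hypotheses of \ref{forkccon} are precisely those of the corollary, namely $\ls$-shortness and $\ls$-superstability, so this construction goes through directly.

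Next, I would apply \ref{checkprop} to verify that $\bar{\fork}$ satisfies \cite[Hypothesis 11.1]{s6}. This step requires no new work since the proposition is already stated and proved in the preceding section; I am simply citing it with the same substitution $\lambda\mapsto\mu^{++}$, $\mu\mapsto\mu^+$ used there. The third and final step is to apply \ref{wsfact}, which immediately upgrades \cite[Hypothesis 11.1]{s6} into the conclusion that $\bar{\fork}$, when restricted to $1$-types and $\mu^{++}$-saturated models, is a weakly successful good $\mu^{++}$-frame.

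The one point that requires a line of justification is the identification of this resulting frame with the one induced by the good $(\geq\mu^+)$-frame from \ref{s3fact}(2). By construction in \ref{forkccon}(2), $\bar{\fork}$ agrees on $1$-types with the frame from \ref{s3fact}(2): the coheir extension imposed to handle longer types reduces, for a $1$-type, to the defining condition in \ref{s3fact}(2) itself. Hence the good $\mu^{++}$-frame obtained by restricting $\bar{\fork}$ to $1$-types over $\mu^{++}$-saturated models is precisely the restriction of the \ref{s3fact}(2)-frame to the $\mu^{++}$-saturated submodels, which is what the statement asserts.

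The main obstacle is not in this corollary but has already been absorbed into the earlier work: establishing \ref{forkcsum}, and in particular the uniqueness clause (e) that powers \ref{checkprop}(5), is where shortness is genuinely used. Once those ingredients are in place, Corollary \ref{wscor} is a bookkeeping consequence. I would therefore keep the written proof of \ref{wscor} to a single short paragraph citing \ref{forkccon}, \ref{checkprop}, and \ref{wsfact} in order, and noting the agreement of the two frames on $1$-types.
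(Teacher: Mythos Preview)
Your proposal is correct and matches the paper's own proof essentially step for step: the paper likewise cites \ref{forkccon}, \ref{forkcsum}, and \ref{checkprop} to obtain $\bar{\fork}$ satisfying \cite[Hypothesis 11.1]{s6}, then applies \ref{wsfact}, and finishes by observing that the resulting frame is just $\fork$ restricted to $\mu^{++}$-saturated models. Your final identification argument (that $\bar{\fork}$ agrees with $\fork$ on $1$-types by construction) is exactly the content of the paper's last sentence.
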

\begin{proof}
Since ${\bf K}$ is $\ls$-short and superstable in $\ls$, it is also $\mu$-short and superstable in $\mu$ and we can use \ref{s3fact}(2) to build a good $(\geq\mu^+)$-frame $\fork$. By \ref{forkccon}, \ref{forkcsum} and \ref{checkprop}, we can extend $\fork$ to a nonforking relation $\bar{\fork}$ that satisfies \cite[Hypothesis 11.1]{s6}. By \ref{wsfact}, $\bar{\fork}$ induces a weakly successful good $\mu^{++}$-frame over the $\mu^{++}$-saturated models. But this frame is just $\fork$ restricted to $\mu^{++}$-saturated models. 
\end{proof}

One more ingredient for categoricity transfer is the property of full model continuity. Vasey drew results from \cite{shh,JS13,Jar16} and showed that the weakly successful frame we obtained is $\omega$-successful. And if we move up by three successors (so we consider $\mu^{+5}$-saturated models), then it can be extended to a good frame with full model continuity. 

\begin{definition}
Let ${\bf K}$ be an AEC with a monster model, $\lambda\geq\ls$ and $\fork$ be a $(<\infty,\geq\lambda)$-nonforking relation on $K_{\geq\lambda}$. $\fork$ has \emph{full model continuity} if the following holds: for any limit ordinal $\delta$, any $\langle M_i^k:i\leq\delta\rangle$ increasing and continuous in $K_{\geq\lambda}$ where $k=0,1,2$, if $\fk{M_i^1}{M_i^0}{M_i^2}$ for each $i<\delta$, then $\fk{M_\delta^1}{M_\delta^0}{M_\delta^2}$.
\end{definition}

We sum up the previous paragraph in the following fact. The original results were from \cite[Sections 11, 12]{s6} but applied them to our context (in the same spirit as \ref{wscor}). In particular item (1) is from \cite[Theorem 11.21]{s6}; item (2) is from \cite[Theorem 12.16]{s6}. We will not define \emph{$\omega$-successfulness} because under amalgamation and tameness, it coincides with weak successfulness \cite[Facts 11.15, 11.19]{s6}. Also, \emph{good+} will be automatically satisfied by the new frame \cite[Fact 11.17]{s6} so we skip its definition.

\begin{fact}\mylabel{sfact}{Fact \thetheorem}
Let ${\bf K}$ be an AEC with a monster model and $\mu=2^{\ls}$. Suppose ${\bf K}$ is $\ls$-short and superstable in $\ls$.\begin{enumerate}
\item The weakly successful good $\mu^{++}$-frame from \ref{wscor} is also $\omega$-successful.
\item Let $\lambda=(\mu^{++})^{+3}=\mu^{+5}$. The frame can be extended by $\oop{NF}$ (defined for quadruples of models) and then closed to a good $(\leq\lambda,\geq\lambda)$-frame over the $\lambda$-saturated models. Moreover, the new frame is good+ and has full model continuity.
\end{enumerate}
\end{fact}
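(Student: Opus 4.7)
The plan is to apply Vasey's machinery in \cite[Sections 11, 12]{s6} essentially verbatim, since the heavy lifting has already been done: Proposition \ref{checkprop} verifies \cite[Hypothesis 11.1]{s6} for $\bar{\fork}$, and Corollary \ref{wscor} packages the output as a weakly successful good $\mu^{++}$-frame on the $\mu^{++}$-saturated models. Both items should then follow by straightforward translation of the relevant theorems in \cite{s6}, as was done for \ref{wscor}.

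For item (1), I would first note that ${\bf K}$ has amalgamation (built into $\mn$) and, by Remark \ref{shortremark}, $\ls$-shortness implies $\ls$-tameness, which upgrades to $\mu^{++}$-tameness on the $\mu^{++}$-saturated sub-AEC by \ref{forkcsum}(c) together with standard tameness-monotonicity. Then \cite[Facts 11.15, 11.19]{s6} say that under amalgamation and tameness, weak successfulness coincides with $\omega$-successfulness for the good frames arising from our construction. Since our frame is weakly successful (Corollary \ref{wscor}), $\omega$-successfulness follows.

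For item (2), with $\omega$-successfulness in hand, I would invoke \cite[Theorem 12.16]{s6}, which takes an $\omega$-successful good $\mu^{++}$-frame and produces, after three successor steps (hence at $\lambda = (\mu^{++})^{+3} = \mu^{+5}$), a good $(\leq\lambda,\geq\lambda)$-frame on the $\lambda$-saturated models, extended by a relation $\oop{NF}$ on quadruples of models. The three successors are needed so that enough saturation is available for the $\oop{NF}$-closure to interact well with unions of chains; this is exactly where full model continuity is proved. The good+ property is then automatic by \cite[Fact 11.17]{s6}, which asserts that the $\oop{NF}$-extension of any weakly successful frame in our setup is good+. The substitutions ``$\lambda \mapsto \mu^{++}$'' and ``$\mu \mapsto \mu^{+}$'' used in \ref{checkprop} propagate cleanly through \cite[Sections 11, 12]{s6}.

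The main obstacle I anticipate is pure bookkeeping: making sure that every auxiliary hypothesis buried inside \cite[Theorem 12.16]{s6} and \cite[Facts 11.15, 11.17, 11.19]{s6} has been verified in Proposition \ref{checkprop} or follows from shortness, superstability, and the presence of $\mn$. In particular, one must double-check that the ``right witness property for models'' \ref{forkcsum}(g), the uniqueness property \ref{forkcsum}(e), and the local character for longer types \ref{forkcsum}(d) are fed into the correct places in Vasey's proofs (local character is used in establishing full model continuity, and uniqueness plus witness are used in the $\oop{NF}$-construction). Once these are aligned, the two items of the fact follow directly by citation, with no new model-theoretic argument required in this section.
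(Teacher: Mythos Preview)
Your proposal is correct and follows essentially the same approach as the paper: this statement is recorded as a \emph{Fact} without a standalone proof, and the paragraph preceding it simply cites \cite[Theorem 11.21]{s6} for item (1), \cite[Theorem 12.16]{s6} for item (2), \cite[Facts 11.15, 11.19]{s6} for the coincidence of weak successfulness with $\omega$-successfulness under amalgamation and tameness, and \cite[Fact 11.17]{s6} for good+. Your write-up is more explicit about the bookkeeping (which properties from \ref{forkcsum} feed into which steps of Vasey's argument), but the underlying justification is identical.
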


The rest of this section discusses what happens if we combine our results with \cite[Sections 13-15]{s6}. Readers only interested in categoricity transfer can skip to \ref{primefact} which will be used in Section \ref{apsec6}.

After obtaining a good $(\leq\lambda,\geq\lambda)$-frame with full model continuity, Vasey \cite[Sections 13,14]{s6} went on extending the right hand side of $\fork$ to arbitrary sets, and then the left hand side to arbitrary lengths. Such results still apply to our construction because we have shortness and amalgamation in our background assumptions (see also \cite[Hypotheses 13.1, 14.1]{s6}). We first state what Vasey had obtain in \cite[Theorem 15.6]{s6}.

\begin{fact}\mylabel{dtlfact}{Fact \thetheorem}
Let ${\bf K}$ be a $(<\kappa)$-short AEC with a monster model. Suppose there are $\lambda,\theta$ such that 
\begin{enumerate}
\item $\ls<\kappa=\beth_\kappa<\lambda=\beth_\lambda\leq\theta$;
\item $\cf(\lambda)\geq\kappa$;
\item ${\bf K}$ is categorical in $\theta$;
\end{enumerate}
then there is a $(<\infty,\geq\lambda^{+4})$-good frame over the $\lambda^{+4}$-saturated models except that extension holds over saturated models only. Moreover it has full model continuity.
\end{fact}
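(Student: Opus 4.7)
The plan is to follow Vasey's construction in \cite[Sections 1--15]{s6}. The overall strategy is to build a good frame via coheir, promote it successively to weakly successful, $\omega$-successful, and then $\oop{NF}$-closed with full model continuity, and finally extend the base sort to sets and the length of types to arbitrary ordinals. The target cardinal $\lambda^{+4}$ records the accumulated loss in the saturation threshold from each of these extension steps.

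First, I would use categoricity in $\theta$ together with the $(<\kappa)$-shortness and monster model hypothesis to deduce sufficient superstability and stability transfer below $\theta$. The fixed-point hypotheses $\kappa=\beth_\kappa<\lambda=\beth_\lambda$ and $\cf(\lambda)\geq\kappa$ are used precisely here: they push the starting cardinal above a fixed point of the beth function so that the no-order-property arguments in \cite[Sections 1--10]{s6} apply, yielding a $(<\kappa,\geq\lambda)$-coheir relation over the $\lambda$-saturated models whose restriction to $1$-types is a good $(\geq\lambda)$-frame. Using coheir rather than nonsplitting is what makes the shortness hypothesis bite, since coheir interacts well with witness properties on tuples.

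Second, I would invoke \cite[Sections 11--12]{s6} (in the same spirit as the proofs of \ref{wsfact} and \ref{sfact} in our Section \ref{apsec4}) to upgrade the frame: domination triples give weak successfulness, then an application of \cite{JS13,Jar16} yields $\omega$-successfulness, and one further $\oop{NF}$-closure step produces a good+ frame with full model continuity. Each upgrade costs a successor in the saturation threshold, and together with the set-base and long-tuple extensions below this accounts for the passage from $\lambda$ to $\lambda^{+4}$.

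Third, for the extension to longer types and set-based domains, I would apply \cite[Sections 13--14]{s6} in order. Section 13 extends the right-hand side of $\bar{\fork}$ from models to arbitrary sets, invoking amalgamation over sets (which sits inside the monster-model setup used throughout \cite{s6}). Section 14 extends the left-hand side to tuples of arbitrary length, where $(<\kappa)$-shortness is the key: the right and left $(<\kappa)$-witness properties reduce forking for an arbitrarily long tuple to forking of its restrictions of length $<\kappa$, for which the previously built frame already applies. The caveat that extension holds only over saturated models is intrinsic, because coheir's extension property is known only over sufficiently saturated bases. The main obstacle is bookkeeping: ensuring that uniqueness, symmetry, transitivity, local character, and full model continuity all survive simultaneously through each cardinal shift and each extension step, and that the good frame axioms continue to hold on the ever-growing class of allowed parameters.
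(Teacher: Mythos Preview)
The paper does not give its own proof of this statement: it is recorded as a \emph{Fact} quoted from \cite[Theorem 15.6]{s6}, and the surrounding discussion merely contrasts that result (built via coheir) with the paper's own nonsplitting-based construction in \ref{finalcor}. Your outline is a reasonable summary of how \cite{s6} obtains the result, so in that sense it matches the source the paper is citing.

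One point deserves correction. In your third step you say that \cite[Section 13]{s6} ``invok[es] amalgamation over sets (which sits inside the monster-model setup used throughout \cite{s6}).'' This is not right, and it matters for this paper in particular. Amalgamation over sets is \emph{not} part of the monster-model hypothesis; the paper explicitly separates $\mn$ from $\mn_{\text{set}}$ in \ref{setdef} and only introduces $AP$ over sets later, in Section~\ref{apsec6}. The statement of \ref{dtlfact} assumes only a monster model, and \cite[Hypotheses 13.1, 14.1]{s6} require shortness and ordinary amalgamation, not amalgamation over sets. Extending the right-hand side of the independence relation to sets is a definitional move (allowing set parameters inside an ambient model), not an appeal to amalgamating diagrams over arbitrary set bases. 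If your sketch genuinely relied on $AP$ over sets here, it would overshoot the hypotheses of the Fact.
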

We state one more fact from \cite{sh394} about categoricity. A complete proof can be found in \cite{BGVV}.
\begin{fact}\mylabel{catfat}{Fact \thetheorem}
Let ${\bf K}$ be an AEC with a monster model. Suppose ${\bf K}$ is categorical in some $\lambda>\ls$, then ${\bf K}$ is superstable in $\ls$.
\end{fact}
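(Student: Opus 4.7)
The plan is to verify the two clauses of superstability in $\ls$ separately: stability in $\ls$, and the absence of a long $\ls$-splitting chain along a universally increasing continuous chain in $K_{\ls}$. Both will be extracted from categoricity in $\lambda>\ls$, using as the main tool Shelah's presentation theorem together with Ehrenfeucht--Mostowski (EM) models generated by linear orders, which let one encode small combinatorial data at $\ls$ as genuinely distinct models at $\lambda$.

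First I would establish stability in $\ls$. Were ${\bf K}$ unstable in $\ls$, one could exhibit a model $M\in K_{\ls}$ and $(2^{\ls})^+$ distinct types in $\gs(M)$. Via the presentation theorem, realize $M$ and these types inside an EM blueprint and then stretch the index order to size $\lambda$; choosing the stretch carefully (e.g.\ dense versus scattered) yields non-isomorphic models in $\lambda$, contradicting categoricity. A quicker route, which I would prefer, is to observe that under a monster model, categoricity in $\lambda$ forces the unique model in $\lambda$ to be $\ls^+$-saturated (by comparing EM models built over orders of different character), which already implies stability in every cardinal below $\lambda$, in particular in $\ls$.

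Next I would handle the no-long-splitting clause. Suppose toward contradiction that there exist a limit $\delta<\ls^+$, a universally increasing continuous chain $\langle M_i:i\leq\delta\rangle\subseteq K_{\ls}$, and $p\in\gs(M_\delta)$ that $\ls$-splits over every $M_i$. At each $i<\delta$ the splitting provides a pair of models $N_i^0,N_i^1$ and a $\lk$-embedding $f_i:N_i^0\to N_i^1$ fixing $M_i$ with $f_i(p)\restriction N_i^1\ne p\restriction N_i^1$. Iterating these choices along $2^{<\delta}$ and amalgamating (using that each $M_{i+1}$ is universal over $M_i$) builds a tree of models of size $\ls$ together with $2^\delta$ pairwise incompatible type-extensions at the branches; passing this through an EM blueprint and stretching produces $2^\lambda$ pairwise non-isomorphic models in $\lambda$, contradicting categoricity. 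This is essentially the argument that ``no long splitting chains'' is equivalent to ``unique limit models,'' rerouted through EM models.

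The main obstacle is the second step: the splitting data lives entirely at $\ls$, but the contradiction must occur at $\lambda$, so one must faithfully encode the whole branching tree inside a Skolemized expansion and then argue that distinct branches of the tree survive as distinguishing invariants of the resulting models of size $\lambda$. Carrying this out in full rigor requires the Shelah presentation theorem, a careful tracking of how Galois types translate into syntactic types in the expanded language, and a Morley-type argument on the stretched order. Since this is precisely the content of \cite{sh394} (with a modern exposition in \cite{BGVV}), I would cite that reference rather than redo the construction.
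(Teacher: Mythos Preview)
The paper gives no proof of this statement: it is recorded as a Fact, attributed to \cite{sh394}, with the note that a complete proof appears in \cite{BGVV}. Your bottom line---cite those references rather than reproduce the construction---is exactly what the paper does, so at the level of what is written here your proposal matches.

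That said, your sketch of the no-long-splitting clause is not the argument in \cite{sh394} or \cite{BGVV}, and as written it does not go through. Iterating splitting witnesses along a chain of height $\delta<\ls^+$ produces at most $2^{\delta}\le 2^{\ls}$ branches, and the branches live over \emph{different} models of size $\ls$; there is no step that converts this into $2^{\lambda}$ pairwise non-isomorphic models in $K_\lambda$, and once stability in $\ls$ is already in hand a small tree of types at $\ls$ is not something you can blow up to a many-models contradiction at $\lambda$. The actual argument exploits the structure of EM models directly: by categoricity the model in $K_\lambda$ is (isomorphic to) an EM model over a well-ordered index set, and one proves that for such models every type over a small EM-submodel fails to $\ls$-split over a still smaller EM-submodel indexed by an initial segment. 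Embedding the offending chain $\langle M_i:i\le\delta\rangle$ and a realization of $p$ into this EM model then contradicts the assumed splitting. So the contradiction comes from a structural property of EM models over well-orders, not from a many-models count; if you want to outline the proof rather than just cite it, that is the mechanism to describe.
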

To compare \ref{dtlfact} with our results, we replace our assumptions of $\ls$-shortness by $\kappa$-shortness, and superstability in $\ls$ by superstability in $\kappa$. 
\begin{corollary}\mylabel{finalcor}{Corollary \thetheorem}
Let ${\bf K}$ be a $\kappa$-short AEC with a monster model where $\kappa\geq\ls$. Suppose ${\bf K}$ is categorical in some $\theta>\kappa$ (superstability in $\kappa$ is sufficient), then there is a $(<\infty,\geq(2^{\kappa})^{+5})$-good frame over the $(2^{\kappa})^{+5}$-saturated models models except that extension holds over saturated models only. Moreover it has full model continuity.
\end{corollary}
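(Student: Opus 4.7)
The plan is to rerun the Sections \ref{apsec3}--\ref{apsec4} construction with the base cardinal $\kappa$ in place of $\ls$, and then invoke Vasey's \cite[Sections 13, 14]{s6} to extend the nonforking relation to arbitrarily long types over arbitrary sets.

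First I secure $\kappa$-superstability. If it is part of the hypotheses there is nothing to do; otherwise, ${\bf K}$ is categorical in some $\theta>\kappa$, and the proof of Fact \ref{catfat} (see \cite{BGVV}) actually yields superstability in every $\mu'$ with $\ls\leq\mu'<\theta$, because categoricity in $\theta$ forces the $\theta$-sized model to be saturated and saturation propagates downward under amalgamation to give superstability at each intermediate cardinal. In particular ${\bf K}$ is superstable in $\kappa$. Once this is in hand, I replicate Sections \ref{apsec3} and \ref{apsec4} with $\ls$ replaced by $\kappa$ throughout: by Fact \ref{s3fact}(1) applied with parameters $\kappa$ and any $\lambda>\kappa$, the $\kappa^+$-saturated models form a sub-AEC with LS-number $\kappa^+$; setting $\nu\defeq 2^\kappa$ and defining $\bar\fork$ as in Definition \ref{forkccon} with $\kappa$ in place of $\ls$, Propositions \ref{forkcsum} and \ref{checkprop} remain valid, and Facts \ref{wsfact} and \ref{sfact} then yield a weakly successful, $\omega$-successful good $\nu^{++}$-frame which, after three further successors, closes to a good $(\leq\lambda,\geq\lambda)$-frame over the $\lambda$-saturated models with $\lambda\defeq\nu^{+5}=(2^\kappa)^{+5}$, enjoying good+ and full model continuity.

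Finally I extend the nonforking relation to arbitrary lengths over arbitrary sets by the machinery of \cite[Sections 13, 14]{s6}. The hypotheses \cite[Hypotheses 13.1, 14.1]{s6} amount to a monster model, $\kappa$-shortness, and the underlying good $(\leq\lambda,\geq\lambda)$-frame with full model continuity, all of which are in hand. This produces the $(<\infty,\geq\lambda)$-good frame over the $\lambda$-saturated models claimed in the corollary, with extension restricted to saturated right-hand sides, matching the conclusion of Fact \ref{dtlfact}. The main obstacle is conceptual: confirming that every lemma in Sections \ref{apsec3}--\ref{apsec4} (notably the local-character computation of Proposition \ref{forkcsum}(d) and the coheir definition in \ref{forkccon}) depends only on shortness and superstability at a single base cardinal, so that the uniform substitution $\ls\leadsto\kappa$ is valid. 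This is immediate upon inspection but is the crux of the argument.
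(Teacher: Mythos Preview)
Your proposal is correct and follows essentially the same approach as the paper's proof sketch: obtain $\kappa$-superstability, rerun Sections \ref{apsec3}--\ref{apsec4} with $\kappa$ in place of $\ls$ to reach \ref{sfact}, and then extend via \cite[Sections 13, 14]{s6}. The only minor difference is in how you justify superstability at $\kappa$: you dig into the proof of \ref{catfat} to extract superstability at every $\mu'<\theta$, whereas the paper simply combines \ref{catfat} (giving $\ls$-superstability) with the upward transfer in \ref{unisatrmk} (tameness plus $\ls$-superstability yields $\kappa$-superstability); the latter route is slightly cleaner and avoids reopening the proof of \ref{catfat}.
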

\begin{proof}[Proof sketch]
By categoricity and \ref{catfat}, ${\bf K}$ is superstable in $\kappa$. By \ref{sfact} (replacing $\ls$ there by $\kappa$), there is a $(<(2^{\kappa})^{+5},\geq(2^{\kappa})^{+5})$-good frame over the $(2^{\kappa})^{+5}$-saturated models. Extend the frame to arbitrarily long types as in \cite[Sections 13,14]{s6}.
\end{proof}
As we can see, using nonsplitting to build a good frame has a much lower threshold than using coheir in obtaining \ref{dtlfact}. The fixed points of beth function are to guarantee no order property (see \cite[Fact 2.21]{s6}), which currently lacks a good upper bound (under amalgamation and stability). \cite[Corollary A.16]{s8} claimed a result similar to our corollary and we highlight the differences here: 
\begin{enumerate}
\item The threshold he obtained is $({\ls}^{<\kappa})^{+5}$ while ours is $(2^{\kappa})^{+5}$.
\item He used $(<\kappa)$-shortness directly but we weakened it to $\kappa$-shortness. We did so both for convenience and to readily apply \ref{sfact}. 
\item In verifying \cite[Hypothesis 11.1]{s6}, he drew heavy machineries from \cite[Sections 1-10]{s6} but we proved them directly in \ref{checkprop}.
\end{enumerate}

\section{Primes for saturated models}\label{apsec5}
We will combine the results from the previous section and \ref{dombuild} below to conclude that ${\bf K}$ has primes for saturated models. However, it is not clear whether this implies primes for models in general, so we cannot invoke categoricity transfer of AECs with primes and amalgamation. Readers only interested in categoricity transfer can skip to \ref{primefact} which will be used in the next section.
\begin{definition}\cite[Definition 2.13]{s12}
Let ${\bf K}$ be an AEC. 
\begin{enumerate}
\item A triple $(a,M,N)$ is a \emph{prime triple} if $M\lk N$, $a\in|N|\backslash|M|$, and the following holds: for any $N'\in K$ with $a'\in|N'|$ and $\gtp(a/M;N)=\gtp(a'/M;N')$ then there exists $f:N\xrightarrow[M]{}N'$ such that $f(a)=a'$.
\item ${\bf K}$ \emph{has primes} if for each $M\in K$ and each nonalgebraic $p\in\gs(M)$, there exists a prime triple $(a,M,N)$ such that $p=\gtp(a/M;N)$.
\end{enumerate}
\end{definition}
The original statement of the following fact is about $K^*$ only but we strengthen the monster model assumption to $K$. Vasey allowed the right hand side of $\fork$ to be sets (and had extra axioms) but we stick to models (see also the proof of \ref{checkprop}(1)).
\begin{fact}\cite[Theorem 3.6]{s12}\mylabel{dombuild}{Fact \thetheorem}
Let ${\bf K}$ be an AEC with a monster model. Suppose there is $\lambda_0\geq\ls$ and ${\bf K}^*$ such that:
\begin{enumerate}
\item ${\bf K}^*\subseteq {\bf K}$ is a sub-AEC of ${\bf K}$;
\item ${\bf K}^*$ is categorical in $\lambda_0$;
\item There is a good $(<\infty,\geq\lambda_0)$-frame with full model continuity over ${\bf K}^*$;
\item ${\bf K}^*_{\lambda_0}$ has the $\lambda_0$-existence property for domination triples (see \ref{domdef});
\end{enumerate}
Then for any $\lambda>\lambda_0$, the saturated models of ${\bf K}^*_{\lambda}$ has primes.
\end{fact}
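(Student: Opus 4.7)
The plan is to fix a saturated $M\in {\bf K}^*_\lambda$ with $\lambda>\lambda_0$ and a nonalgebraic $p\in\gs(M)$, and construct a prime triple $(a,M,N)$ witnessing $p$ by transfinite recursion along a resolution of $M$ into $\lambda_0$-sized saturated submodels. First I would write $M=\bigcup_{i<\delta}M_i$ as an increasing continuous chain of saturated ${\bf K}^*$-models with each $M_i$ of size $\lambda_0+|i|$; such a resolution exists by categoricity of ${\bf K}^*$ in $\lambda_0$ (hypothesis (2)) together with uniqueness of saturated models of larger size. By local character of the good $(<\infty,\geq\lambda_0)$-frame (hypothesis (3)) there is $i_0<\delta$ such that $p$ does not fork over $M_{i_0}$, so after reindexing I may assume $p$ does not fork over $M_0$.

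Next, apply hypothesis (4) to $p\restriction M_0$ to obtain a domination triple $(a,M_0,N_0)$ with $N_0\in {\bf K}^*_{\lambda_0}$ saturated and $\gtp(a/M_0;N_0)=p\restriction M_0$. I would then build an increasing continuous chain $\langle N_i:0\leq i<\delta\rangle$ of saturated models with $M_i\lk N_i$, $N_0\lk N_i$, and $\gtp(N_i/M_i;N_{i+1})$ not forking over $M_0$. At successor stages, apply the extension property of the frame to $\gtp(N_i/M_i)$ to obtain a candidate, and then take a saturated amalgam over $M_i$ using categoricity in $\lambda_0$ and uniqueness of saturated models; at limit stages, take unions and invoke full model continuity to preserve the nonforking relationship. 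Set $N=\bigcup_{i<\delta}N_i$; then $N$ is saturated of size $\lambda$, and $\gtp(a/M;N)=p$ by continuity of types together with uniqueness of nonforking extensions.

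To verify primality, let $N'\in K$ and $a'\in|N'|$ with $\gtp(a'/M;N')=p$. I would construct an increasing chain of $M_i$-embeddings $f_i:N_i\to N'$ with $f_0(a)=a'$. The base step uses the defining property of the domination triple $(a,M_0,N_0)$: realizing $\gtp(N_0/M_0)$ inside a suitable substructure of $N'$ via extension and uniqueness of the frame, one arranges $f_0(a)=a'$. At stage $i+1$, amalgamate $N_{i+1}$ with $f_i[N_i]$ over $N_i$ inside $N'$ using uniqueness of the nonforking extension of $\gtp(N_i/M_i)$ to $M_{i+1}\cup f_i[N_i]$; at limits take unions and use full model continuity. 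Then $f=\bigcup_{i<\delta}f_i$ is the required embedding $f:N\xrightarrow[M]{}N'$ with $f(a)=a'$.

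The main obstacle lies in the successor step of both passes: one must coherently amalgamate a $\lambda_0$-sized witness with a model whose size grows with $i$, while preserving the nonforking-over-$M_0$ relationship and, in the second pass, carrying out the amalgamation inside the fixed ambient $N'$. The notion of a domination triple is engineered exactly for this: once $\gtp(a/M_i)$ is pinned down and does not fork over $M_0$, the type $\gtp(N_i/M_i)$ is controlled up to the uniqueness of nonforking amalgams. Combining this rigidity with full model continuity at limit stages and the uniqueness clause of the good frame is what makes the construction go through; the technical challenge is to line up the inductive invariants so that all three tools kick in at every step, and in particular so that each $f_i$ respects both the domination-control from $M_0$ and the saturation of $N_{i+1}$.
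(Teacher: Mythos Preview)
The paper does not prove this statement: it is recorded as a \emph{Fact} quoted from \cite[Theorem 3.6]{s12}, so there is no in-paper proof to compare your proposal against. Your outline does follow the general architecture of Vasey's argument in \cite{s12}: resolve the saturated $M$ as a continuous chain starting in $\lambda_0$, anchor a domination triple at the base using hypothesis~(4), propagate it upward using extension and uniqueness of the frame together with full model continuity at limits, and verify primality by building a coherent chain of embeddings into an arbitrary target.

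That said, there is a genuine gap at the base of your primality verification. The defining property of a domination triple $(a,M_0,N_0)$ is an \emph{independence} statement---$\fk{a}{M_0}{N'}$ implies $\fk{N_0}{M_0}{N'}$---not an embedding statement; it does not by itself produce an $f_0:N_0\to N'$ over $M_0$ sending $a$ to $a'$. The missing step is to show that, under the frame hypotheses, a domination triple is already a prime (equivalently, uniqueness) triple in $K^*_{\lambda_0}$: one must argue that any two nonforking amalgams of $N_0$ and $M_1$ over $M_0$ realizing the same type of $a$ are equivalent, which requires uniqueness for \emph{long} types (types of models), not just $1$-types. This is exactly where hypothesis~(3)---a good $(<\infty,\geq\lambda_0)$-frame rather than merely a good $(\geq\lambda_0)$-frame---is used, and it is the technical heart of \cite{s12}. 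Your sketch invokes ``the defining property of the domination triple'' and ``extension and uniqueness of the frame'' at this point without explaining how independence of $N_0$ is converted into an embedding of $N_0$; that conversion is the nontrivial content. A secondary issue is that at successor stages you need $(a,M_{i+1},N_{i+1})$ to again be a domination (hence prime) triple, which requires a transfer lemma you have not stated.
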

\begin{corollary}\mylabel{firsttrans}{Corollary \thetheorem}
Let ${\bf K}$ be an AEC with a monster model and $\lambda_0=(2^{\ls})^{+5}$. Suppose ${\bf K}$ is $\ls$-short and superstable in $\ls$, then for $\lambda>\lambda_0$, ${\bf K}_\lambda^{\lambda\text{-sat}}$ has primes.
\end{corollary}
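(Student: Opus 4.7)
The plan is to apply Fact \ref{dombuild} with $\lambda_0 = (2^{\ls})^{+5}$ and the sub-AEC ${\bf K}^* \defeq {\bf K}^{\lambda_0\text{-sat}}$, so the task reduces to verifying the four hypotheses of that fact. Hypothesis (1), that ${\bf K}^*$ is a sub-AEC of ${\bf K}$, follows from Fact \ref{s3fact}(1): since ${\bf K}$ is $\ls$-short (hence $\ls$-tame by Remark \ref{shortremark}) and $\ls$-superstable, ${\bf K}^{\lambda_0\text{-sat}}$ is an AEC with Löwenheim--Skolem number $\lambda_0$. Hypothesis (2), categoricity of ${\bf K}^*$ in $\lambda_0$, is automatic under the monster model assumption, since any two $\lambda_0$-saturated models of size $\lambda_0$ are isomorphic.

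For hypothesis (3), I would first apply Fact \ref{sfact}(2) with $\mu = 2^{\ls}$, noting that $\lambda_0 = \mu^{+5}$, to produce a good $(\leq\lambda_0, \geq\lambda_0)$-frame over the $\lambda_0$-saturated models with full model continuity. To promote this to a $(<\infty, \geq\lambda_0)$-good frame, I would invoke the extension procedure of \cite[Sections 13, 14]{s6}: first extend the right-hand side of $\fork$ to arbitrary sets, then the left-hand side to arbitrary length tuples. The running hypotheses \cite[Hypotheses 13.1, 14.1]{s6} hold in our setting thanks to shortness and amalgamation (the same observation already used in the proof of \ref{finalcor}), and full model continuity is preserved through these extensions. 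Hypothesis (4), the $\lambda_0$-existence property for domination triples, is a direct application of Fact \ref{domexist}, which applies because $\lambda_0 > \mu^+$.

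With all four hypotheses verified, Fact \ref{dombuild} yields that for every $\lambda > \lambda_0$ the saturated models of size $\lambda$ in ${\bf K}^*$ have primes. Since for such $\lambda$ the saturated members of ${\bf K}^*_\lambda = ({\bf K}^{\lambda_0\text{-sat}})_\lambda$ are precisely the $\lambda$-saturated members of ${\bf K}_\lambda$, i.e.\ the elements of ${\bf K}_\lambda^{\lambda\text{-sat}}$, the corollary follows. The main technical obstacle lies in checking that our nonsplitting-based construction fits cleanly into the axiomatic framework of \cite[Sections 13, 14]{s6}, which was originally set up around coheir, and that the successive frame extensions genuinely preserve the full model continuity demanded by Fact \ref{dombuild}; this is essentially bookkeeping on top of Section \ref{apsec4}, but care is needed so that no coheir-specific hypothesis sneaks in.
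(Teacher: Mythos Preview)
Your approach is essentially the same as the paper's: set ${\bf K}^* = {\bf K}^{\lambda_0\text{-sat}}$ and verify the four hypotheses of Fact~\ref{dombuild}. Hypotheses (1)--(3) are handled the same way, and your remark that the extension to $(<\infty,\geq\lambda_0)$ is ``essentially bookkeeping on top of Section~\ref{apsec4}'' matches the paper's invocation of \ref{finalcor}.

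There is, however, a point you gloss over in hypothesis (4). Fact~\ref{domexist} gives the $\lambda_0$-existence property for domination triples \emph{with respect to $\bar{\fork}$}, the nonforking relation built in \ref{forkccon}. But the good $(<\infty,\geq\lambda_0)$-frame you use for hypothesis (3) is the one from \ref{sfact}(2) extended via $\oop{NF}$ and then via \cite[Sections 13, 14]{s6}; this is a \emph{different} relation on long types, and the definition of a domination triple involves the long-type statement $\fk{N}{M}{N'}$. So it is not automatic that a domination triple for $\bar{\fork}$ is a domination triple for the frame of hypothesis (3). The paper addresses this explicitly: both relations restrict to the same good $(<2,\geq\lambda_0)$-frame (namely the one from \ref{s3fact}(2)), and the paper argues that this agreement on 1-types is enough to transfer the existence property. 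You should not simply say ``direct application of Fact~\ref{domexist}'' here; you need to reconcile the two frames, as the paper does.
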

\begin{proof}
Let ${\bf K}^*=K^{\lambda_0\text{-sat}}$. ${\bf K^*}$ is a sub-AEC of ${\bf K}$ by \ref{s3fact}(1) and is categorical in $\lambda_0$ by a back-and-forth argument. Substituting $\kappa=\ls$ in \ref{finalcor}, there is a good $(<\infty,\geq\lambda_0)$-frame with full model continuity over ${\bf K}^*$. We would like to invoke \ref{domexist} (substituting $\lambda$ there by $\lambda_0$) and say that the good frame has $\lambda_0$-existence property for domination triples. While the good frame might not agree with the nonforking relation in \ref{domexist} for longer types, they both extend the good $(<2,\geq\lambda_0)$-frame from \ref{s3fact}(2). Since domination triples are about 1-types only, we can conclude that the nonforking relation from \ref{domexist} and hence the good frame from \ref{finalcor} has the $\lambda_0$-existence property for domination triples. By \ref{dombuild}, for $\lambda>\lambda_0$, $({\bf K}^*)^{\lambda\text{-sat}}_{\lambda}={\bf K}_\lambda^{\lambda\text{-sat}}$ has primes.
\end{proof}
\begin{remark}
The above proof went back to the notion of domination triples (instead of uniqueness triples) to quote \ref{domexist} because it was used in the assumptions of \cite{s12}. We suspect that one can derive a version of \ref{dombuild}(4) with uniqueness triples, which can simplify the proof because we have the existence property of the latter (see \ref{wsfact}). In the original construction, \cite[Section 11]{s6} built domination triples and showed that they are also uniqueness triples. \cite[Remark 11.8]{s6} claimed that if the nonforking relation has extension (to longer types), then uniqueness triples are domination triples. \cite[Fact A.18]{s8} cited \cite[Lemma 11.7]{s6} without proof that it is true in general (without assuming extension). We cannot verify those claims so we follow the longer route to obtain the existence property for domination triples.
\end{remark}
It would be ideal if \ref{firsttrans} concluded that ${\bf K}_\lambda$, instead of ${\bf K}_\lambda^{\lambda\text{-sat}}$, has primes, because we have the following fact:
\begin{fact}\mylabel{primefact}{Fact \thetheorem}\cite[Corollary 10.9]{s14}
Let ${\bf K}^*$ be an $\oop{LS}({\bf K}^*)$-tame AEC with primes and arbitrarily large models. If ${\bf K}^*$ is categorical in some $\lambda>\oop{LS}({\bf K}^*)$, then it categorical in all $\lambda'\geq\min(\lambda,h(\oop{LS}({\bf K}^*)))$.
\end{fact}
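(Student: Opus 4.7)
The plan is to combine a downward transfer from $\lambda$ to the Hanf number $h(\oop{LS}({\bf K}^*))$ (when $\lambda\geq h(\oop{LS}({\bf K}^*))$) with an upward transfer argument powered by the primes assumption, mirroring the strategy used to reach \ref{firsttrans}. Writing $\lambda^*\defeq\oop{LS}({\bf K}^*)$ for brevity, the first step is to derive $\lambda^*$-superstability of ${\bf K}^*$ from categoricity at $\lambda>\lambda^*$ by the analogue of \ref{catfat}; combined with $\lambda^*$-tameness this produces a good $(\geq(\lambda^*)^+)$-frame on the $(\lambda^*)^+$-saturated models by the analogue of \ref{s3fact}, giving nonforking for $1$-types with all the usual axioms (stability, extension, uniqueness, local character, symmetry).

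For the downward half, when $\lambda\geq h(\lambda^*)$ I would invoke a Morley/Shelah-style argument: using Ehrenfeucht-Mostowski blueprints built inside the unique model of size $\lambda$, every model of size $\mu\in[h(\lambda^*),\lambda]$ is forced to be saturated, hence ${\bf K}^*$ is categorical in every such $\mu$. If instead $\lambda<h(\lambda^*)$, we simply take $\mu=\lambda$ and skip this step. Either way we obtain a categoricity cardinal $\mu\geq\min(\lambda,h(\lambda^*))$ at which the good frame applies and the unique model is saturated.

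For the upward half, fix $\mu'>\mu$ and an arbitrary $N\in K^*_{\mu'}$; it suffices to show that $N$ is saturated, since uniqueness of saturated models then gives categoricity at $\mu'$. I would argue by induction on $\mu'$: given $M\lk N$ with $\nr{M}<\mu'$ and $p\in\gs(M)$, use the primes assumption to pick a prime triple $(a,M,M')$ realizing $p$; tameness reduces the realization of $p$ to its $(\lambda^*)^+$-sized restrictions, and the good frame's extension and uniqueness axioms produce a realization of $p$ inside $N$. Exhausting types along a cofinal chain of small submodels of $N$ then forces $N$ to realize every type over every $(<\mu')$-submodel, so $N$ is saturated.

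The main obstacle I expect is the behaviour at limits of small cofinality in the upward construction, where one needs the union of a chain of saturated models to remain saturated. Primes are essential here: they supply a canonical next step at each stage so that the type-realization bookkeeping only needs to track $(\lambda^*)^+$-many tasks per submodel, and local character of the frame prevents types from being lost at the limit. Without primes one could only hope to build \emph{some} saturated model at $\mu'$; it is primes together with tameness that force \emph{every} model at $\mu'$ to be saturated, which is precisely what categoricity at $\mu'$ demands.
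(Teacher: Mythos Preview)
Your downward half is essentially right: categoricity above the Hanf number transfers down to $h(\lambda^*)$ via the Ehrenfeucht--Mostowski/omitting types machinery, exactly as in Shelah's original argument.

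The upward half, however, has a genuine gap. You propose to show directly that an arbitrary $N\in K^*_{\mu'}$ is saturated by using a prime triple $(a,M,M')$ to ``produce a realization of $p$ inside $N$''. But primeness goes the wrong way for this: a prime triple gives you a \emph{minimal} extension $M'$ of $M$ realizing $p$, together with embeddings of $M'$ into any \emph{other} model that already realizes $p$. It tells you nothing about whether the fixed model $N$ realizes $p$ in the first place, and neither tameness nor the extension/uniqueness axioms of the good frame manufacture a realization inside a prescribed model. So the step ``the good frame's extension and uniqueness axioms produce a realization of $p$ inside $N$'' does not go through.

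The paper's argument (following \cite{s14,s8}) runs by contradiction in the opposite direction. One first secures an unbounded set of categoricity cardinals (\ref{lastfact}(2)); in particular there is a categoricity cardinal $\lambda''>\mu'$. If $K^*_{\mu'}$ were not categorical, take a non-saturated $N\in K^*_{\mu'}$ omitting some $p\in\gs(M)$. Primes are then used to transfer this \emph{non}-saturation upward: the prime model over $M\cup\{a\}$ (with $a\vDash p$) lets one carry out an orthogonality/Vaughtian-pair construction that builds a non-saturated model of size $\lambda''$, contradicting categoricity there. The point is that primes control how a single new element sits over $M$, which is exactly what is needed to propagate an omitted type, not to realize one.
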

The main component of the proof came from \cite{s8} (or see \cite{snote} for a written-up  version). The idea is that ${\bf K}$ to show categoricity $\lambda'>\lambda$, one can pick a bigger categorical cardinal $\lambda''$ (guaranteed by \cite[Theorem 9.8]{s14}). Suppose $K^*_{\lambda'}$ is not categorical, then one can use primes to transfer non-saturation from $\lambda'$ to $\lambda''$. Since we cannot assume $K^*_{\lambda'}$ is categorical in the first place, we need primes for $K^*_{\lambda'}$ rather than the saturated models of $K^*_{\lambda'}$.

\begin{ques}
Using the assumptions in \ref{firsttrans} (or more), is it possible to obtain primeness for sufficiently saturated models? A positive answer will simplify the rest of the proof and remove the assumption of amalgamation over sets to obtain categoricity transfer.
\end{ques}
\section{$AP$ over sets and multidimensional diagrams}\label{apsec6}
In this section, we will add the extra assumption of amalgamation over sets (\ref{setdef}) to obtain excellence over sufficiently saturated models. This allows us to use \cite{ss2} and show that those models have primes. Then we can invoke \ref{primefact} to do categoricity transfer.

In \cite[Section 7]{ss2}, given a categorical good $\lambda$-frame (for example a good frame over the $\lambda$-saturated models), they defined when a frame \emph{reflects down}, is \emph{extendible}, \emph{very good} etc. We do not need the precise definitions but only the following fact:
\begin{fact}\mylabel{extfact}{Fact \thetheorem}
Let ${\bf K}$ be a $\ls$-short AEC with a monster model. Suppose ${\bf K}$ is superstable in $\ls$ and let $\lambda=(2^{\kappa})^{+5}$, then there is a $(<\omega)$-extendible categorical good $(\geq\lambda)$-frame over the $\lambda$-saturated models.
\end{fact}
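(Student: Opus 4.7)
The plan is to assemble the good frame produced in Section 4 and then match it to the notion of $(<\omega)$-extendibility from \cite[Section 7]{ss2}. First I would set $\mu = 2^{\ls}$ and invoke \ref{wscor} to obtain a weakly successful good $\mu^{++}$-frame over the $\mu^{++}$-saturated models (this uses $\ls$-shortness and $\ls$-superstability, fed into \ref{checkprop} and \ref{wsfact}). Then \ref{sfact}(1) upgrades this to an $\omega$-successful frame, and \ref{sfact}(2) gives, with $\lambda = \mu^{+5} = (2^{\ls})^{+5}$, a good $(\leq\lambda,\geq\lambda)$-frame over the $\lambda$-saturated models obtained by extending via $\oop{NF}$ and closing, which is moreover good+ and has full model continuity.

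Next I would verify that the resulting frame is categorical: shortness implies tameness (\ref{shortremark}), so a standard back-and-forth argument shows any two $\lambda$-saturated models of size $\lambda$ in ${\bf K}$ are isomorphic, giving categoricity of the underlying sub-AEC in $\lambda$. Existence of such models in every $\mu' \geq \lambda$ follows from superstability and the existence of arbitrarily large models, so the frame is indeed over a nonempty categorical class with arbitrarily large models in each cardinal $\geq\lambda$.

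The main obstacle is matching our output to \cite{ss2}'s definition of $(<\omega)$-extendibility. Unpacking \cite[Section 7]{ss2}, a categorical good $\lambda$-frame is $(<\omega)$-extendible if the $+$-operation on frames can be iterated $n$ times for every $n<\omega$, producing coherent good $\lambda^{+n}$-frames at each step, with the required reflection/compatibility axioms. This is exactly the content of $\omega$-successfulness as used in \ref{sfact}(1), and good+ together with full model continuity from \ref{sfact}(2) provide the structural axioms needed at each stage of the tower. I would therefore conclude by walking through \cite[Section 7]{ss2}'s axioms one by one, noting that each is either built into the definition of $\omega$-successful or follows from good+ plus full model continuity; no additional hypotheses (in particular no instance of WGCH, in contrast to entry (8) of the \hyperlink{catlist}{table}) are needed, since shortness and amalgamation over models already provide the tameness-style locality that \cite{ss2} would otherwise extract from set-theoretic assumptions.
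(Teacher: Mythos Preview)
Your proposal is correct and follows essentially the same route as the paper: assemble the frame from \ref{sfact} (via \ref{wscor} and \ref{checkprop}), then verify the conditions of \cite[Section 7]{ss2} using $\omega$-successfulness, good+, and full model continuity. The paper is slightly more explicit in that it enumerates five separate checks---in particular, items (1)--(4) isolate that the two-dimensional relation $\oop{NF}$ is \emph{very good} (good, reflects down, has full model continuity), citing \cite[Facts 12.2, 12.10]{s6} and \cite[Remark 7.8]{ss2} for the individual pieces, before invoking \cite[Fact 7.20]{ss2} to get $(<\omega)$-extendibility from $\omega$-successful plus good+---whereas you bundle these under ``walking through the axioms one by one.'' One minor remark: your closing comment that amalgamation over sets is already doing work here is premature; at this stage only amalgamation over models (the monster model hypothesis) is used, and amalgamation over sets enters only later in \ref{stepcor}.
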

\begin{proof}[Proof sketch]
Readers familiar with \cite{ss2} and Vasey's papers can consult \cite[Fact 7.21]{ss2}, which applied the same idea on compact AECs. Notice that ``${\ls}^{+6}$'' there should be $\kappa^{+6}$.\\ Alternatively, we use the frame from \ref{finalcor} and verify directly the extra conditions (see \cite[Section 7]{ss2} for relevant definitions):
\begin{enumerate}
\item There is a two-dimensional nonforking relation that extends our frame: this is witnessed by $\oop{NF}$ in \ref{sfact}(2).
\item The two-dimensional nonforking relation is \emph{good}: namely the frame it extends is a good frame; the nonforking relation has long transitivity and local character. Our $\oop{NF}$ satisfies these by \cite[Facts 12.2, 12.10]{s6}.
\item The two-dimensional nonforking relation \emph{reflects down}: by \cite[Remark 7.8]{ss2} it suffices to check that it is good and extends to $\lambda^+$. This is true again by \ref{sfact}(2).
\item The two-dimensional nonforking relation has full model continuity (which makes the relation \emph{very good}). This is true by \ref{sfact}(2).
\item The frame is $(<\omega)$-extendible: by \cite[Fact 7.20]{ss2}, it suffices to show that it is  $\omega$-successful and good+, which is true by \ref{sfact}(1)(2).
\end{enumerate}
\end{proof}

Given a $(<\omega$)-extendible good frame, \cite[Sections 8-11]{ss2} went on to build multidimensional independence relations from the two-dimensional nonforking relation (which extends the good frame). Basically a multidimensional independence relation takes in models indexed by a general partial order instead of $\mathcal{P}(2)$ as in a two-dimensional nonforking relation (see \cite[Definition 8.11]{ss2} for a precise definition). We state some relevant definitions:
\begin{definition}Let ${\bf K}$ be an abstract class and $(I,\leq)$ be a partial order.
\begin{enumerate}
\item \cite[Definition 8.1]{ss2}
 An \emph{$(I,{\bf K})$-system} is a sequence ${\bf m}=\langle M_u:u\in I\rangle$ such that $u\leq v\Rightarrow M_u\lk M_v$. We omit ${\bf K}$ if the context is clear. Usually $I=\mathcal{P}(n)$ or $I=\mathcal{P}(n)\backslash\{n\}$ for some $n<\omega$.
\item \cite[Definition 8.8]{ss2}
The language of $(I,{\bf K})$-systems is $\tau^I\defeq L({\bf K})\cup\{P_i:i\in I\}$ where each $P_i$ is a unary predicate. The \emph{abstract class of $(I,{\bf K})$-systems} is ${\bf K}^I=(K^I,\leq_{K^I})$ where for each ${\bf m}\in K^I$, $\langle (P_i)^M:i\in I\rangle$ forms a disjoint system and the models in $K^I$ are ordered by disjoint extensions (see \cite[Definition 8.6]{ss2}).
\end{enumerate}
\end{definition}
\begin{remark}
For our purpose, we only need to know that if ${\bf m}\in K^{\mathcal{P}(n)}$, then $\langle(P_i)^{\bf m}:i\in \mathcal{P}(n)\rangle$ is an $\mathcal{P}(n)$-system whose models are at least ordered by $\lk$.
\end{remark}
We now define a generalized version of amalgamation as well as higher-dimensional uniqueness properties. These were key to establish excellence and to build primes.
\begin{definition}
\begin{enumerate}
\item
\cite[Definition 5.6]{ss2} Let ${\bf K}$ be an abstract class in $\tau$ and let $\phi$ be a first-order quantifier-free formula in $\tau$.
\begin{enumerate}
\item $M,N\in K$ are \emph{$\phi$-equal} if $\phi(M)=\phi(N)$ and the induced partial $\tau$-structures by $\phi$ on $M,N$ are equal: for each relation and function symbol $R\in\tau$, $R^M\restriction\phi(M)=R^N\restriction \phi(N)$.
\item A \emph{$\phi$-span} is a triple $(M_0,M_1,M_2)$ such that $M_0\lk M_1$, $M_0\lk M_2$; and $M_1,M_2$ are $\phi$-equal.
\item A \emph{$\phi$-amalgam} of a $\phi$-span $(M_0,M_1,M_2)$ is a triple $(N,f_1,f_2)$ such that $N\in K$, $f_i:M_i\xrightarrow[M_0]{}N$ for $i=1,2$ and $f_1\restriction\phi(M_1)=f_2\restriction\phi(M_2)$.
\item $M\in K$ is a \emph{$\phi$-amalgamation base} if every $\phi$-span of the form $(M,M_1,M_2)$ has a $\phi$-amalgam.
\end{enumerate}
\item \cite[Definition 10.14]{ss2} For $n<\omega$, let $\phi_n$ be the formula in the language of $(n,{\bf K})$-systems such that for any ${\bf m}=\langle M_u:u\in \mathcal{P}(n)\rangle$, $a\in|{\bf m}|$, we have ${\bf m}\vDash\phi_n[a]$ iff $a\in\bigcup_{u\in\mathcal{P}(n)\backslash\{n\}}M_u$.
\item \cite[Definition 10.2]{ss2} 
\begin{enumerate}
\item For $n<\omega$, let $\mathcal{I}_n$ be the class of all partial orders isomorphic to an initial segment of $\mathcal{P}(n)$ and let $\mathcal{I}_{<\omega}=\bigcup_{n<\omega}\mathcal{I}_n$.
\item Let $\mathfrak{i}$ be a multidimensional independence relation and $P$ be either existence, extension or uniqueness (see \cite[Definitions 8.11, 8.16]{ss2}; we do not need the precise descriptions here). Let $I\subseteq \mathcal{I}_{<\omega}$ be a partial order and $\lambda\geq\ls$.
\begin{enumerate}
\item $\mathfrak{i}$ has $n$-$P$ if $I$ is defined on $\mathcal{P}(n)$-systems and $\mathfrak{i}\restriction \mathcal{I}_n$ has $P$. 
\item $\mathfrak{i}$ has $(\lambda,n)$-$P$ if $\mathfrak{i}\restriction K_\lambda$ has $n$-$P$.
\end{enumerate}
\end{enumerate}
\end{enumerate}
\end{definition}
We will adapt the proof of item (2) below to transfer uniqueness to higher dimensions. They used WGCH and we will replace it by amalgamation over sets. The construction of ${\bf K}_{\mathfrak{i},\mathfrak{i}^*,\mathcal{P}(n)}^{\text{proper},*}$ is very complicated and spans several sections. We only need to know that it is a sub-abstract class of ${\bf K}^{\mathcal{P}(n)}$.
\begin{fact}\mylabel{stepfact}{Fact \thetheorem}
Let $n<\omega$, $\mathfrak{i}$ be a very good (see \cite[Definition 11.2]{ss2}) multidimensional independence relation defined on $\mathcal{P}(n+1)$-systems, $\mathfrak{i}^*$ be its restriction to limit models ordered by universal extensions. Write ${\bf K}^*={\bf K}_{\mathfrak{i},\mathfrak{i}^*,\mathcal{P}(n)}^{\text{proper},*}$.
\begin{enumerate}
\item \cite[Lemma 10.15(5)]{ss2} Let $({\bf m}^0,{\bf m}^1,{\bf m}^2)$ be a $\phi_n$-span in ${\bf K}^*$ and write ${\bf m}^i=\langle M_u^i:u\in\mathcal{P}(n)\rangle$ for $i=0,1,2$. Then $({\bf m}^0,{\bf m}^1,{\bf m}^2)$ has a $\phi_n$-amalgam in ${\bf K}^*$ iff there exists $N\in K$, $f_i:M^i_{\mathcal{P}(n)}\xrightarrow[{\bf m}^0]{}N$ for $i=1,2$ such that $f_1\restriction M_u^1=f_2\restriction M_u^2$ for $u\in\mathcal{P}(n)\backslash\{n\}$.
\item \cite[Lemma 11.16(2)]{ss2} Let $\lambda,\lambda^+$ be in the domain of $\mathfrak{i}$. Suppose $2^\lambda<2^{\lambda^+}$ and for $\mu=\lambda,\lambda^+$, $\mathfrak{i}^*$ has $(\mu,n)$-existence and $(\mu,n)$-uniqueness. Then $\mathfrak{i}^*$ also has $(\lambda,n+1)$-uniqueness.
\end{enumerate}
\end{fact}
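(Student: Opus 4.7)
The plan is to handle the two items separately. Both concern multidimensional independence systems and both reduce, in the end, to careful unpacking of definitions plus one genuinely combinatorial transfer argument in item~(2).

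For item~(1), I would argue by direct translation between $\phi_n$-amalgams of $\mathcal{P}(n)$-systems and amalgams of their top models. For the forward direction, given a $\phi_n$-amalgam $({\bf n},g_1,g_2)$ in ${\bf K}^*$, take $N$ to be the top model $N_n$ of ${\bf n}$ and set $f_i\defeq g_i\restriction M^i_{\mathcal{P}(n)}$. Because $\phi_n({\bf m}^i)=\bigcup_{u\in\mathcal{P}(n)\setminus\{n\}}M^i_u$ and $g_1,g_2$ agree on $\phi_n$-parts by hypothesis, the required pointwise equality $f_1\restriction M^1_u=f_2\restriction M^2_u$ for $u\neq n$ is automatic. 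For the converse, given $N$ and $f_1,f_2$, I would build ${\bf n}=\langle N_u:u\in\mathcal{P}(n)\rangle$ by setting $N_u\defeq f_1(M^1_u)=f_2(M^2_u)$ for $u\subsetneq n$ (the equality holding because ${\bf m}^1,{\bf m}^2$ are $\phi_n$-equal and the $f_i$ agree on proper-subset levels) and $N_n\defeq N$, with $g_i\defeq f_i$ as system embeddings. The $\phi_n$-amalgam condition is then built in by construction; the only real content is checking that ${\bf n}$ lies in ${\bf K}^*$, i.e.\ is a proper $\mathcal{P}(n)$-system in the sense of $\mathfrak{i}^*$. This is forced because invariance and monotonicity of $\mathfrak{i}$ transport the independence data present in the ${\bf m}^i$ along the $f_i$, and properness is defined by independence conditions on the diagram.

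For item~(2), which is the genuine work, I would follow the classical Shelah strategy for transferring uniqueness from dimension $n$ to dimension $n+1$. Suppose toward a contradiction that $(\lambda,n+1)$-uniqueness fails: some $\mathcal{P}(n+1)\setminus\{n+1\}$-system of size $\lambda$ admits two non-equivalent $\mathfrak{i}^*$-independent completions. Using $(\lambda,n)$- and $(\lambda^+,n)$-existence, I would lift the situation to size $\lambda^+$ by constructing a continuous increasing chain of $\mathcal{P}(n+1)\setminus\{n+1\}$-systems $\langle {\bf m}_\alpha:\alpha<\lambda^+\rangle$ whose union has size $\lambda^+$. At each stage, the failure of $(\lambda,n+1)$-uniqueness combined with item~(1) produces a branching point: two incompatible ways to extend the current approximation, expressed through non-$\phi_n$-amalgamable spans. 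Iterating yields a tree of $2^{\lambda^+}$ pairwise non-equivalent $\phi_n$-spans at size $\lambda^+$, while $(\lambda^+,n)$-uniqueness bounds the number of equivalence classes of such spans above by roughly $2^\lambda$ (each span being essentially determined, up to equivalence, by its base of size $\lambda$). The hypothesis $2^\lambda<2^{\lambda^+}$ then delivers the contradiction in the standard weak-diamond style.

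The main obstacle is the counting step in~(2): one must show that failure of $(\lambda,n+1)$-uniqueness really yields $2^{\lambda^+}$ pairwise non-equivalent objects rather than merely that many labelled but pairwise equivalent diagrams. This is where weak-diamond coding becomes indispensable, and where one must exploit the very-goodness of $\mathfrak{i}$ (long transitivity, local character, full model continuity) at every limit step of the tree construction in order to keep the approximations inside ${\bf K}^*$ and to preserve $\mathfrak{i}^*$-independence over universal extensions. Item~(1) then serves as the technical bridge that converts the abstract failures of higher-dimensional uniqueness into the concrete branching needed for the counting argument.
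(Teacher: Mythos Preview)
The paper does not prove this statement at all: it is recorded as a \emph{Fact} with explicit citations to \cite[Lemma 10.15(5)]{ss2} and \cite[Lemma 11.16(2)]{ss2}, and no argument is given here. So there is no ``paper's own proof'' to compare against. What the paper \emph{does} reveal, in the proof of the subsequent Corollary, is the architecture of the argument in \cite{ss2}: the hypothesis $2^\lambda<2^{\lambda^+}$ is used precisely to produce a $\phi_n$-amalgamation base in ${\bf K}^*_\lambda$, and from the existence of such a base one deduces $(\lambda,n+1)$-uniqueness. Item~(1) is then the translation device linking $\phi_n$-amalgams to the concrete model-theoretic data.

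Your sketch of item~(1) is fine and matches this role. Your sketch of item~(2), however, misidentifies the mechanism. You frame it as: build $2^{\lambda^+}$ pairwise non-equivalent spans, then invoke $(\lambda^+,n)$-uniqueness to bound their number by $2^\lambda$. But $(\lambda^+,n)$-uniqueness says nothing about counting spans up to equivalence; it constrains completions of a \emph{fixed} lower system, not the number of spans over varying bases. The actual argument is a weak-diamond construction in the sense of Devlin--Shelah: one shows that if \emph{no} object in ${\bf K}^*_\lambda$ were a $\phi_n$-amalgamation base, then the guessing principle $\Phi_{\lambda^+}$ (which follows from $2^\lambda<2^{\lambda^+}$) lets one build, along a chain of length $\lambda^+$, an object that defeats every candidate amalgam at stationarily many stages --- contradicting the structural hypotheses (existence, uniqueness at level $n$, very-goodness). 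The output is the existence of an amalgamation base, not a direct counting contradiction. Your phrase ``standard weak-diamond style'' suggests you have the right instinct, but the contradiction you wrote down is not the one that actually fires.
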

\begin{corollary}\mylabel{stepcor}{Corollary \thetheorem}
Let $n<\omega$, $\mathfrak{i}$ be a very good multidimensional independence relation defined on $\mathcal{P}(n+1)$-systems, $\mathfrak{i}^*$ be its restriction to limit models ordered by universal extensions. Let $\lambda,\lambda^+$ be in the domain of $\mathfrak{i}$. Suppose ${\bf K}$ has amalgamation over sets and for $\mu=\lambda,\lambda^+$, $\mathfrak{i}^*$ has $(\mu,n)$-existence and $(\mu,n)$-uniqueness. Then $\mathfrak{i}^*$ also has $(\lambda,n+1)$-uniqueness.
\end{corollary}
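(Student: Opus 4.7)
The plan is to adapt the proof of \ref{stepfact}(2) from \cite{ss2}, replacing the instance of WGCH $2^\lambda<2^{\lambda^+}$ by amalgamation over sets. Under the identification of $\mathcal{P}(n+1)$-systems with $\phi_n$-spans of $\mathcal{P}(n)$-systems in ${\bf K}^*_{\mathcal{P}(n)}$, the statement $(\lambda,n+1)$-uniqueness of $\mathfrak{i}^*$ amounts to showing that any two $\phi_n$-amalgams of a fixed $\phi_n$-span $({\bf m}^0,{\bf m}^1,{\bf m}^2)$ in ${\bf K}^*_{\lambda}$ are equivalent. Let $(N^a,f_1^a,f_2^a)$ and $(N^b,f_1^b,f_2^b)$ be two such $\phi_n$-amalgams. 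By \ref{stepfact}(1), each is determined by an amalgam in ${\bf K}$ of the top models $M^1_{\mathcal{P}(n)}$ and $M^2_{\mathcal{P}(n)}$ that fixes the boundary set $A\defeq\bigcup_{u\in\mathcal{P}(n)\backslash\{n\}}|M_u^1|=\bigcup_{u\in\mathcal{P}(n)\backslash\{n\}}|M_u^2|$.

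The hypotheses $(\mu,n)$-existence and $(\mu,n)$-uniqueness for $\mu=\lambda,\lambda^+$ let us align the two candidate amalgams along the boundary: by lifting the automorphisms witnessing $(\lambda,n)$-uniqueness of $\mathfrak{i}^*$, I would arrange $f_1^a\restriction A=f_1^b\restriction A$ and likewise for $f_2$. In \cite{ss2}, a weak-diamond argument driven by $2^\lambda<2^{\lambda^+}$ is then used to globally reconcile the two amalgams and verify the commuting square characterizing equivalence of $\phi_n$-amalgams. I would replace this step by a direct construction: invoke $AP$ over the set $A\cup|M^0_{\mathcal{P}(n)}|$ to produce some $N\in K$ together with embeddings $g^a:N^a\to N$ and $g^b:N^b\to N$ that fix $A\cup|M^0_{\mathcal{P}(n)}|$ pointwise. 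Post-composing the $f_i^a$'s and $f_i^b$'s into $N$ then yields the commuting diagram demanded by equivalence of $\phi_n$-amalgams, because the $f_i$-images already coincide on the boundary.

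The main obstacle will be upgrading this set-level amalgam into a genuine amalgam of systems in ${\bf K}^*$, that is, equipping $N$ with a $\mathcal{P}(n)$-system structure (disjointness of the predicates $P_u$ and coherence with $\mathfrak{i}^*$) compatible with both $g^a$ and $g^b$. To address it, I would first use $(\lambda^+,n)$-existence together with extension of $\mathfrak{i}^*$ to stretch $(N^a,N^b,f_i^a,f_i^b)$ to a $\lambda^+$-sized configuration; then apply the very-goodness of $\mathfrak{i}$ — specifically full model continuity and long transitivity from \ref{sfact}(2) — to propagate the $\mathfrak{i}^*$-axioms from the boundary into the newly amalgamated $N$. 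Finally I would reflect the resulting equivalence back down to $\lambda$ using the reflect-down property of $\mathfrak{i}$ (already exploited in \ref{extfact}). The delicate point is that the structure transported across $A$ by the set-amalgam is exactly what the definition of $\phi_n$-equivalence demands; this is precisely the content of \ref{stepfact}(1), which is the reason $AP$ over sets (rather than only over models) is the correct substitute for WGCH here.
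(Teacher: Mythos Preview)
Your proposal misidentifies where WGCH enters the proof of \ref{stepfact}(2). In \cite{ss2}, the weak-diamond hypothesis $2^\lambda<2^{\lambda^+}$ is not used to ``globally reconcile two amalgams''; it is used (via a counting argument) to show that \emph{some} ${\bf m}^0\in K^*_\lambda$ is a $\phi_n$-amalgamation base. Once one has such a base, the derivation of $(\lambda,n+1)$-uniqueness from $(\mu,n)$-existence and $(\mu,n)$-uniqueness for $\mu=\lambda,\lambda^+$ proceeds without any further use of WGCH. Accordingly, the paper's argument is a two-line replacement: given a $\phi_n$-span $({\bf m}^0,{\bf m}^1,{\bf m}^2)$, since $M_u^1=M_u^2$ for $u\in\mathcal{P}(n)\backslash\{n\}$, take $A$ to be the union of these boundary models together with $|M^0_{\mathcal{P}(n)}|$ and apply $AP$ over sets to $M^1_{\mathcal{P}(n)},M^2_{\mathcal{P}(n)}$ over $A$. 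This produces exactly the data required by the right-hand side of \ref{stepfact}(1), so \emph{every} ${\bf m}^0$ is a $\phi_n$-amalgamation base, and the rest of the proof of \ref{stepfact}(2) runs unchanged.

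Your direct attack on $(\lambda,n+1)$-uniqueness also has a gap. Amalgamating $N^a$ and $N^b$ over the set $A\cup|M^0_{\mathcal{P}(n)}|$ gives embeddings $g^a,g^b$ that agree on that set, but equivalence of the two $\phi_n$-amalgams requires $g^a\circ f_i^a=g^b\circ f_i^b$ on \emph{all} of $M^i_{\mathcal{P}(n)}$, not just on the boundary. Set-amalgamation gives no control over the images of elements in $M^i_{\mathcal{P}(n)}\backslash A$, so the commuting square does not follow; your subsequent ``upgrading'' steps (full model continuity, reflect-down, etc.) do not repair this, since the disagreement is already at the level of the underlying ${\bf K}$-embeddings. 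In short, $AP$ over sets is the right substitute for WGCH, but it should be deployed to manufacture a $\phi_n$-amalgam of a given span (existence), not to merge two competing amalgams (uniqueness).
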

\begin{proof}
WGCH was used in the proof of \ref{stepfact}(2) to show that there is a $\phi_n$-amalgamation base $K^*_\lambda$. It suffices to show that the second part of \ref{stepfact}(1) is always true under amalgamation over sets, which will imply that \emph{any} ${\bf m}^0$ is a $\phi_n$-amalgamation base. \\ Let $({\bf m}^0,{\bf m}^1,{\bf m}^2)$ as in \ref{stepfact}(1). We observe the following:
\begin{enumerate}
\item The models in ${\bf m}^0$ are ${\bf K}$-substructures of $M_{\mathcal{P}(n)}^0\leq M_{\mathcal{P}(n)}^i$ for $i=1,2$. In particular ${\bf m}^0$ is a common subset of the latter two.
\item Since $({\bf m}^0,{\bf m}^1,{\bf m}^2)$ is a $\phi_n$-span, ${\bf m}^1$ and ${\bf m}^2$ agree on $\phi_n$, which means that for $u\in\mathcal{P}(n)\backslash\{n\}$, $M_u^1=M_u^2$.
\end{enumerate} 
Now take $A$ be the union of the models in ${\bf m}^0$ as well as $M_u^i$ for $u\in\mathcal{P}(n)\backslash\{n\}$, $i=1,2$. Then we can invoke amalgamation over sets to obtain $f_i:M_{\mathcal{P}(n)}^i\xrightarrow[A]{} N$ for some $N\in K$. By (2), $f_1\restriction M_u^1=\oop{id}=f_2\restriction M_u^2$.
\end{proof}
\begin{remark}
In the above proof, we can relax amalgamation over sets to amalgamation over multiple models. Namely, let $\langle M_u:u\in I\rangle$ be a finite set of models in ${\bf K}$. Suppose each $M_u$ is a ${\bf K}$-substructure  of $N_1$ and $N_2$, then there are $N\in K$ and $f_i:N_i\xrightarrow[\bigcup_{u\in I}M_u]{}N$ for $i=1,2$. The point in the original proof of \ref{stepfact}(2) is to restrict the class to a nice enough ${\bf K}^*$ so that WGCH is sufficient.
\end{remark}
We will show that the frame in \ref{extfact} guarantees that the AEC of sufficiently saturated models is excellent, has primes and hence allows categoricity transfer. 
\begin{definition}\cite[Definition 13.1]{ss2}
\begin{enumerate} 
\item Let $\mathfrak{i}$ be a multidimensional independence relation. $\mathfrak{i}$ is \emph{excellent} if 
\begin{enumerate}
\item $\mathfrak{i}$ is defined on an AEC ${\bf K}^*$;
\item $\mathfrak{i}$ is very good \cite[Definition 11.2]{ss2};
\item $\mathfrak{i}$ has extension and uniqueness \cite[Definitions 8.11, 8.16]{ss2}.
\end{enumerate}
\item An AEC ${\bf K^*}$ is \emph{excellent} if there is an excellent multidimensional independence relation defined on ${\bf K^*}$.
\end{enumerate}
\end{definition}
\begin{fact}\mylabel{excfact}{Fact \thetheorem} 
\begin{enumerate}
\item Let ${\bf K}$ be an AEC. Suppose there is a $(<\omega)$-extendible categorical very good $\lambda$-frame $\mathfrak{s}$ defined on some ${\bf K}_{\mathfrak{s}}$. Let ${\bf K}^*$ be the AEC generated by ${\bf K}_{\mathfrak{s}}$. If WGCH holds, then ${\bf K}^*$ is excellent.
\item Let ${\bf K}^*$ be an AEC. If ${\bf K^*}$ is excellent, then ${\bf K}^{{\oop{LS}({\bf K}^*)}^+\text{-sat}}$ has primes.
\end{enumerate}
\end{fact}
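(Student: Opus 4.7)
The plan is to prove the two parts in sequence. For (1), the goal is to extend the given $(<\omega)$-extendible very good $\lambda$-frame $\mathfrak{s}$ to an excellent multidimensional independence relation $\mathfrak{i}$ defined on $\mathcal{P}(n)$-systems for every $n<\omega$. By $(<\omega)$-extendibility we already obtain, by unwinding definitions, a two-dimensional nonforking extension of $\mathfrak{s}$ satisfying the \emph{very good} clause; repeating free amalgamation then iterates this construction to all finite dimensions, producing a multidimensional $\mathfrak{i}$ satisfying the base axioms, existence, and extension in each dimension. These steps do not require WGCH. The remaining axiom to check, in every dimension, is uniqueness, and this will be the site of the main work.

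The inductive step is where WGCH enters through Fact \ref{stepfact}(2). Assume $(\mu,n)$-existence and $(\mu,n)$-uniqueness of $\mathfrak{i}^*$ at $\mu=\lambda,\lambda^+$; applying Fact \ref{stepfact}(2) upgrades this to $(\lambda,n+1)$-uniqueness. A parallel propagation of $(\mu,n+1)$-existence must run alongside it, and follows from the global extension property of $\mathfrak{i}$ combined with the categoricity of $\mathfrak{s}$ (this is the cascading induction on dimension and cardinal from \cite[Sections 10--11]{ss2}). Iterating up the dimension gives $(\lambda,n)$-uniqueness for all $n$, which is exactly what turns $\mathfrak{i}$ into an excellent multidimensional independence relation; the AEC ${\bf K}^*$ generated by ${\bf K}_{\mathfrak{s}}$ then inherits excellence by definition.

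For part (2), the plan is to convert the higher-dimensional uniqueness of an excellent $\mathfrak{i}$ into prime triples over $\oop{LS}({\bf K}^*)^+$-saturated models. Fix $M$ saturated in $K^{\oop{LS}({\bf K}^*)^+\text{-sat}}$ and a nonalgebraic $p\in\gs(M)$; using one-dimensional existence, realize $p$ by a saturated minimal extension $M\lk N_0$ with $a\in|N_0|$ and claim that $(a,M,N_0)$ is a prime triple. Given any $(a',M,N')$ realizing $p$, the embedding $f:N_0\xrightarrow[M]{}N'$ sending $a$ to $a'$ will be produced as a direct limit of partial embeddings along a resolution $\langle N_0^i:i<\delta\rangle$ of $N_0$ by smaller saturated submodels containing $a$; at each successor stage one uses an appropriate-dimension uniqueness of $\mathfrak{i}$ to glue a partial amalgam over the growing common substructure, and at limits the partial embeddings are concatenated via full model continuity.

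The main obstacle I expect is the same coupling that forces the cascading induction in (1): to apply $n$-uniqueness at each gluing step in (2) one needs the hypothesis of Fact \ref{stepfact}(2) to remain satisfied at the relevant cardinal, so existence and uniqueness must be tracked in lockstep across every cardinal appearing in the resolution. Once that bookkeeping is in place, the excellence of $\mathfrak{i}$ supplies exactly the right $n$-uniqueness at every stage, and the direct-limit embedding $f$ realizes the prime triple condition, which is what item (2) claims.
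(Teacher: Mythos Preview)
The paper does not prove this statement: it is recorded as a \emph{Fact}, i.e.\ a citation of results from \cite{ss2} (part (1) is essentially \cite[Theorem 11.24 and Section 13]{ss2}, part (2) is \cite[Theorem 13.6]{ss2}), and no proof is given in the paper itself. So there is nothing to compare your proposal against in this paper.

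That said, your sketch for (1) is broadly in the right spirit---the cascading induction on dimension and cardinal, with WGCH entering only through the uniqueness step via \ref{stepfact}(2), is indeed how \cite{ss2} proceeds. Your sketch for (2) is vaguer and somewhat off: the actual argument in \cite{ss2} does not build the prime embedding by a direct-limit-of-partial-embeddings scheme indexed by a resolution of $N_0$. Rather, excellence gives existence and uniqueness for $(\mathcal{P}(n)\setminus\{n\})$-systems, and one shows that the resulting amalgam over such a system is a prime model over that system; the case $n=2$ (a type over a model) then yields prime triples directly. Your proposed inductive gluing along a resolution would run into trouble because the partial embeddings at successor stages are not canonically compatible without already knowing some form of primeness or rigidity, which is what you are trying to prove.
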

\begin{corollary}\mylabel{lastcor}{Corollary \thetheorem}
Let ${\bf K}$ be a $\ls$-short AEC with amalgamation over sets and arbitrarily large models. Suppose ${\bf K}$ is superstable in $\ls$ and let $\lambda=(2^{\ls})^{+6}$, then ${\bf K}^{\lambda\text{-sat}}$ is excellent and has primes.
\end{corollary}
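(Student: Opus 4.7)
The plan is to mirror the proof of Fact \ref{excfact}(1), which produces excellence from a $(<\omega)$-extendible categorical very good frame under WGCH, and to replace the unique essential use of WGCH by Corollary \ref{stepcor}, which trades WGCH for amalgamation over sets.

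First, apply Fact \ref{extfact} with $\kappa=\ls$ to obtain a $(<\omega)$-extendible categorical good $(\geq\mu)$-frame $\mathfrak{s}$ over the $\mu$-saturated models, where $\mu=(2^{\ls})^{+5}$. Let ${\bf K}^*\defeq{\bf K}^{\mu\text{-sat}}$; by Fact \ref{s3fact}(1) this is an AEC with $\oop{LS}({\bf K}^*)=\mu$. By Fact \ref{sfact}(1)(2), the frame $\mathfrak{s}$ is $\omega$-successful and good+, and extends via $\oop{NF}$ to a very good two-dimensional nonforking relation with full model continuity. Following the machinery of \cite[Sections 8--11]{ss2}, iterate this construction to produce a multidimensional independence relation $\mathfrak{i}$ on $\mathcal{P}(n)$-systems for all $n<\omega$. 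Very-goodness, existence, and extension of $\mathfrak{i}$ at each dimension are transferred from the corresponding properties of the two-dimensional relation together with the $(<\omega)$-extendibility of $\mathfrak{s}$ and do not rely on WGCH.

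The main step is to establish $(\mu',n)$-uniqueness of $\mathfrak{i}^*$ (the restriction to limit models ordered by universal extensions) for all $n<\omega$ and all $\mu'$ in the domain. I would proceed by induction on $n$. The base cases ($n\leq 1$) follow from uniqueness of the underlying two-dimensional relation. For the inductive step, the Shelah--Vasey argument in \cite{ss2} invokes Fact \ref{stepfact}(2), whose proof uses WGCH solely to exhibit the required $\phi_n$-amalgamation bases inside ${\bf K}^*_{\mathfrak{i},\mathfrak{i}^*,\mathcal{P}(n)}^{\text{proper},*}$. Corollary \ref{stepcor} shows that, under amalgamation over sets in ${\bf K}$, \emph{every} ${\bf m}^0$ is automatically a $\phi_n$-amalgamation base: amalgamate $M_{\mathcal{P}(n)}^1$ and $M_{\mathcal{P}(n)}^2$ over the set $A$ formed by the union of the relevant models, then (using $AL$ and $AP$) take a $\mu$-saturated extension of the amalgam to remain inside ${\bf K}^*$. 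Plugging Corollary \ref{stepcor} into the inductive step in place of Fact \ref{stepfact}(2) transfers $(\mu',n)$- and $((\mu')^+,n)$-uniqueness to $(\mu',n+1)$-uniqueness, completing the induction without any cardinal-arithmetic hypothesis.

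Having verified that $\mathfrak{i}$ is very good and has existence, extension and uniqueness at every dimension, $\mathfrak{i}$ is excellent by definition, so ${\bf K}^*$ is an excellent AEC. Fact \ref{excfact}(2) then yields primes for $({\bf K}^*)^{\oop{LS}({\bf K}^*)^+\text{-sat}}=({\bf K}^*)^{\mu^+\text{-sat}}$. Since any $\mu^+$-saturated model of ${\bf K}$ is in particular $\mu$-saturated and hence lies in ${\bf K}^*$, this class coincides with ${\bf K}^{\mu^+\text{-sat}}={\bf K}^{\lambda\text{-sat}}$ for $\lambda=(2^{\ls})^{+6}$, proving the corollary. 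The main obstacle I anticipate is the bookkeeping: one must carefully audit \cite[Sections 8--11]{ss2} to confirm that WGCH enters the excellence argument \emph{only} through the $\phi_n$-amalgamation-base step handled by Fact \ref{stepfact}(2), so that Corollary \ref{stepcor} is a complete substitute; secondarily, one must confirm that the saturation step used to keep amalgams inside ${\bf K}^*$ does not interfere with the $\phi_n$-span structure (it does not, since the identifications in Corollary \ref{stepcor} happen on the set $A$, which is contained in the amalgam before saturation).
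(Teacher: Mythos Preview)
Your approach is essentially the same as the paper's: obtain the $(<\omega)$-extendible categorical very good frame from Fact \ref{extfact} at $\mu=(2^{\ls})^{+5}$, then run the excellence machinery of \cite{ss2} while substituting Corollary \ref{stepcor} for the single WGCH-dependent step (Fact \ref{stepfact}(2)), and finally apply Fact \ref{excfact}(2) to extract primes one cardinal up.

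There is one small gap. You conclude that ${\bf K}^*={\bf K}^{\mu\text{-sat}}$ is excellent and that ${\bf K}^{\lambda\text{-sat}}$ has primes, but the statement of the corollary asks for excellence of ${\bf K}^{\lambda\text{-sat}}$ itself, not of ${\bf K}^{\mu\text{-sat}}$. Your argument as written does not establish this; it stops at excellence for the $\mu$-saturated class. The paper closes this gap in one line: after obtaining excellence for ${\bf K}^{\lambda^-\text{-sat}}$ and primes for ${\bf K}^{\lambda\text{-sat}}$, it simply restarts the entire argument with $\lambda^-$ replaced by $\lambda$ (the $(\geq\lambda^-)$-frame from Fact \ref{extfact} restricts to a $(\geq\lambda)$-frame over ${\bf K}^{\lambda\text{-sat}}$ with all the same properties), yielding excellence for ${\bf K}^{\lambda\text{-sat}}$ directly. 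You should add this step.

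A minor remark on your saturation concern: in Fact \ref{stepfact}(1) the model $N$ is only required to lie in $K$, not in the proper-systems class, so Corollary \ref{stepcor} already delivers what is needed without a separate saturation step; the passage back to a $\phi_n$-amalgam in ${\bf K}_{\mathfrak{i},\mathfrak{i}^*,\mathcal{P}(n)}^{\text{proper},*}$ is handled inside \cite[Lemma 10.15]{ss2}.
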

\begin{proof}
Let $\lambda^-$ be the predecessor cardinal of $\lambda$. By \ref{extfact}, there is a $(<\omega)$-extendible categorical very good $(\geq\lambda^-)$-frame $\mathfrak{s}$ defined on ${\bf K}^*\defeq K^{\lambda^-\text{-sat}}$ (which is also the AEC generated by ${\bf K}_{\lambda^-}^{\lambda^-\text{-sat}}$; see \ref{s3fact}(1)). In the proof of \ref{excfact}(1), the only usage of WGCH is to show \ref{stepfact}(2), which can be replaced by amalgamation over sets due to \ref{stepcor}. Hence ${\bf K}^*$ is excellent. By \ref{excfact}(2), ${\bf K}^{\lambda\text{-sat}}$ has primes. Restart the whole proof with $\lambda^-$ replaced by $\lambda$ to obtain excellence for ${\bf K}^{\lambda\text{-sat}}$.
\end{proof}
\begin{remark}
Excellence (a nice enough multidimensional independence relation) is an important tool to generalize the main gap theorem to uncountable theories. \cite[Section 1.3]{ss2} already hinted that their result (with non-ZFC assumptions) satisfies (part of) the axioms of \cite{GLmain}. Here we obtain a ZFC version of excellence by assuming amalgamation over sets. This is perhaps not a strong assumption because we still do not have a proof of the main gap theorem for uncountable first-order theories. Future work in this direction could be verifying \cite[Axioms 8-10]{GLmain} on regular types. Relevant results can be found in \cite[III]{shh} but the definitions are different from those in \cite{GLmain}.
\end{remark}
We state two last facts before proving the categoricity transfer in the abstract. The proof of the first fact uses orthogonality calculus while the proof of the second fact uses Shelah's omitting type theorem in \cite{ms90} (see also \cite{bsott}).
\begin{fact}
\begin{enumerate}
\item \cite[Theorem 0.1]{s14}\mylabel{lastfact}{Fact \thetheorem}
Let ${\bf K}$ be an AEC and $\ls\leq\lambda<\theta$. Suppose ${\bf K}$ has a (type-full) good $[\lambda,\theta]$-frame and is categorical in $\lambda,\theta^+$, then it is categorical in all $\mu\in[\lambda,\theta]$. 
\item \cite[Theorem 9.8]{s14} Let ${\bf K}$ be an $\ls$-tame AEC with arbitrarily large models. If it is categorical in some $\lambda>\ls$, then the categoricity spectrum contains $h(\ls)$ and is unbounded.
\end{enumerate}
\end{fact}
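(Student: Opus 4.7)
The plan is to handle the two items by different tools. For item (1), my approach is orthogonality calculus inside the given type-full good $[\lambda,\theta]$-frame. First I would define regular types over saturated models in $K_\mu$ for $\mu\in[\lambda,\theta]$ (nonalgebraic types whose nonforking extensions stay nonalgebraic) and the orthogonality relation $p\perp q$, and verify its symmetry and non-forking calculus from the frame's uniqueness, extension, and transitivity axioms; type-fullness, stability, and local character then ensure that saturated models exist in every $K_\mu$ in the interval and that the ``dimension'' of a regular type in a saturated model is a well-defined cardinal invariant.

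The main step is to deduce unidimensionality of the frame from categoricity at both endpoints. Categoricity in $\lambda$ forces the unique $M_\lambda$ to be saturated, and categoricity in $\theta^+$ forces $M_{\theta^+}$ to be saturated, so by a standard chain argument the saturated model of $K_\theta$ is uniquely determined as a reduct of $M_{\theta^+}$. If the frame were not unidimensional, standard orthogonality calculus would let one build non-isomorphic models in $K_{\theta^+}$ by varying dimensions across orthogonal classes of regular types, contradicting categoricity in $\theta^+$. Thus the frame is unidimensional, every model in each $K_\mu$ is saturated, and categoricity transfers throughout $[\lambda,\theta]$. The hard part will be carrying out the orthogonality and dimension bookkeeping uniformly across the interval rather than at a single cardinal, since regular types and their dimensions at different $\mu$ must be matched via nonforking extensions within the interval frame.

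For item (2), the plan is a Shelah presentation plus omitting types argument, upgraded by tameness. From the unique $M_\lambda$ I would first present ${\bf K}$ as a $\mathrm{PC}$-class via Shelah's presentation theorem, extract a long order-indiscernible sequence from $M_\lambda$ by a Morley-style argument for AECs, and build Ehrenfeucht--Mostowski models of every size $\kappa\geq\lambda$. Shelah's omitting types theorem from \cite{ms90} (see also \cite{bsott}) then arranges, above the Hanf number $h(\ls)$, that the EM construction omits every syntactic type not realized in $M_\lambda$; $\ls$-tameness converts this syntactic omission into omission at the level of Galois types, so each EM model of size $\kappa\geq h(\ls)$ realizes exactly the Galois types of $M_\lambda$ and is therefore Galois-saturated. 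By uniqueness of saturated models, categoricity holds at $h(\ls)$, and iterating the EM construction at progressively larger cardinals yields unboundedness of the categoricity spectrum. The obstacle here is ensuring that tameness is strong enough to promote the syntactic omission produced by \cite{ms90} to Galois-type omission uniformly at every $\kappa\geq h(\ls)$.
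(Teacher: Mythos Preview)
The paper does not prove this statement; it is cited as a Fact from \cite{s14}, with only the one-line remark that ``the proof of the first fact uses orthogonality calculus while the proof of the second fact uses Shelah's omitting type theorem in \cite{ms90}.'' So there is no detailed proof in the paper to compare against, only a methodological hint. Your item~(1) sketch matches that hint and is broadly the right architecture: orthogonality calculus on the type-full good frame, unidimensionality forced by categoricity at the endpoints, then saturation of every model in the interval.

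Your item~(2) has the right ingredients (Shelah presentation, EM models, the Makkai--Shelah omitting-types machinery, tameness) but the way you assemble them contains a genuine gap. The sentence ``each EM model of size $\kappa\geq h(\ls)$ realizes exactly the Galois types of $M_\lambda$ and is therefore Galois-saturated'' is a non sequitur: a model of size $\kappa>\lambda$ realizing the same Galois types over $\ls$-sized sets as $M_\lambda$ is in no way $\kappa$-saturated, since saturation demands realizing all types over arbitrary small subsets of the large model itself, and those types are not determined by what happens inside $M_\lambda$. The omitting-types argument runs in the opposite direction from what you wrote: one shows that \emph{if} some model of size $\mu$ in the relevant range fails to be saturated, then the omitting-types theorem (applied to the $\mathrm{PC}$-presentation) pushes that failure of saturation down to a model of size $\lambda$, contradicting categoricity there. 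Tameness enters to convert Galois non-saturation into a syntactic type that can be omitted. The EM blueprint is used separately to produce saturated models in all large enough cardinals, and uniqueness of saturated models then gives categoricity. So you have identified the correct tools, but you need to reverse the direction of the omitting-types step and separate it from the EM step rather than fusing them.
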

\begin{theorem}\mylabel{lastthm}{Theorem \thetheorem}
Let ${\bf K}$ be an AEC which is $\ls$-short and has a monster model (with amalgamation over sets). Suppose ${\bf K}$ is categorical in some $\xi>\ls$, then it is categorical in all $\xi'\geq\min(\xi,h(\ls))$.
\end{theorem}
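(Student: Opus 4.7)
The plan is to run the categoricity transfer of \ref{primefact} inside the sub-AEC ${\bf K}^{\lambda\text{-sat}}$ where $\lambda=(2^{\ls})^{+6}$, and then pull the resulting spectrum back to ${\bf K}$. First, I would invoke \ref{catfat} to upgrade categoricity in $\xi>\ls$ to $\ls$-superstability, which feeds \ref{lastcor} and yields that ${\bf K}^{\lambda\text{-sat}}$ is excellent and has primes. Since $\ls$-shortness propagates to $\lambda$-tameness on ${\bf K}^{\lambda\text{-sat}}$ (via \ref{shortremark}), since arbitrarily large models and a monster structure are inherited from ${\bf K}$, and since $\oop{LS}({\bf K}^{\lambda\text{-sat}})=\lambda$ by \ref{s3fact}(1), the class ${\bf K}^{\lambda\text{-sat}}$ satisfies every hypothesis of \ref{primefact} except possibly categoricity at some cardinal exceeding $\lambda$.

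That missing ingredient is extracted from ${\bf K}$ itself: \ref{lastfact}(2), applied to the $\ls$-tame ${\bf K}$, places $h(\ls)$ inside an unbounded categoricity spectrum of ${\bf K}$, so I can pick some categoricity cardinal $\xi^{**}>\lambda$ of ${\bf K}$. The standard tame-categoricity-implies-saturation argument above the Hanf number forces the unique model of ${\bf K}$ of size $\xi^{**}$ to be saturated, hence $\lambda$-saturated, so ${\bf K}^{\lambda\text{-sat}}$ is also categorical in $\xi^{**}$. Feeding this into \ref{primefact} gives categoricity of ${\bf K}^{\lambda\text{-sat}}$ in every $\mu'\geq\min(\xi^{**},h(\lambda))$; letting $\xi^{**}$ range over the unbounded spectrum of ${\bf K}$ and filling in the intervening cardinals via \ref{lastfact}(1) applied to the good $(\geq\lambda)$-frame from \ref{extfact}, I obtain categoricity of ${\bf K}^{\lambda\text{-sat}}$ in the full tail above $h(\ls)$. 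Downward transfer to $\min(\xi,h(\ls))$ for ${\bf K}$ itself will then be handled by bracketing any interior cardinal between $\xi$ (by hypothesis) or $h(\ls)$ (by \ref{lastfact}(2)) on the left and a larger categoricity cardinal on the right, and invoking \ref{lastfact}(1) again on the good $(\geq\lambda)$-frame.

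The main obstacle is the transfer of this tail spectrum from ${\bf K}^{\lambda\text{-sat}}$ back onto ${\bf K}$: categoricity of ${\bf K}^{\lambda\text{-sat}}$ in $\mu'$ only asserts uniqueness among $\lambda$-saturated models of size $\mu'$, whereas one needs every $M\in K_{\mu'}$ to be $\lambda$-saturated. This is precisely the issue flagged by the question following \ref{primefact}, and it is where the amalgamation-over-sets assumption (already used via \ref{stepcor} to secure excellence in \ref{lastcor}) earns its keep a second time. My plan for this step is to mimic Vasey's transfer proof of \ref{primefact} inside ${\bf K}$ directly: assuming some $M\in K_{\mu'}$ fails to be $\lambda$-saturated, resolve $M$ as a continuous $\lk$-chain of $\lambda$-saturated pieces, pick a type over an initial piece that is omitted in $M$, and use the primes on ${\bf K}^{\lambda\text{-sat}}$ together with amalgamation over sets to iterate the omission along a tower up to an arbitrarily large categoricity cardinal $\xi^\dagger$ of ${\bf K}$ (provided by \ref{lastfact}(2)), contradicting the saturation of the unique model of size $\xi^\dagger$. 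Assembling all three paragraphs, ${\bf K}$ will be categorical in every $\xi'\geq\min(\xi,h(\ls))$, as required.
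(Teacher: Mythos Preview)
Your first two paragraphs correctly recover the paper's steps (1)--(3). The gap is in the third paragraph.

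Your proposed contradiction defeats itself: if $M\in K_{\mu'}$ can be resolved as a $\lk$-chain of $\lambda$-saturated pieces, then $M$ is already $\lambda$-saturated, since ${\bf K}^{\lambda\text{-sat}}$ is an AEC by \ref{s3fact}(1) and is therefore closed under chains. There is no omitted type left to iterate, and neither primes nor a second invocation of amalgamation over sets is needed here---that hypothesis is spent entirely inside \ref{lastcor}.

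This closure-under-chains observation \emph{is} the paper's transfer back to ${\bf K}$. When $\xi\geq\lambda$: the unique model in $K_\xi$ is saturated, so every $M\in K_{\geq\xi}$ resolves as a chain of saturated models of size $\xi$, whence $M\in K^{\lambda\text{-sat}}$ and $K_{\geq\xi}=K^{\lambda\text{-sat}}_{\geq\xi}$ outright. No non-saturation tower, no primes on ${\bf K}$, no extra use of $AP$ over sets.

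Your sketch also does not handle the case $\xi<\lambda$, where no such resolution is available from categoricity at $\xi$ alone. The paper treats this separately: first run the previous paragraph with $h(\ls)$ (a categoricity cardinal of ${\bf K}$ exceeding $\lambda$, by \ref{lastfact}(2)) in place of $\xi$, obtaining categoricity of ${\bf K}$ everywhere above $h(\ls)$; then pass to ${\bf K}^{\xi\text{-sat}}$, which carries a good $(\geq\xi)$-frame via \ref{s3fact}(2) and is categorical both in $\xi$ (by saturation) and in $h(\ls)^+$, so \ref{lastfact}(1) fills in $[\xi,h(\ls)]$; finally rerun the resolution argument with $\xi$-saturated pieces. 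Your own plan to bracket with \ref{lastfact}(1) ``on the good $(\geq\lambda)$-frame'' for ${\bf K}$ does not work as stated: that frame lives on ${\bf K}^{\lambda\text{-sat}}$, not on ${\bf K}$, and ${\bf K}$ is not known to be categorical in $\lambda$.
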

\begin{proof}
We follow the same idea in \cite{s14,ss2}, where we obtain primes for sufficiently saturated models by the results in this section, then transfer categoricity by Section \ref{apsec5} and the above fact. Categoricity also bootstraps the original AEC to be eventually categorical.
\begin{enumerate}
\item By \ref{catfat}, ${\bf K}$ is superstable in $\ls$. Let $\lambda=(2^{\ls})^{+6}$. By \ref{lastcor}, ${\bf K}^*\defeq{\bf K}^{\lambda\text{-sat}}$ has primes.
\item By \ref{lastfact}(2), we may assume that ${\bf K}$ (hence ${\bf K}^*$) is categorical in some $\theta>\lambda=\oop{LS}({\bf K}^*)$. By \ref{primefact}, ${\bf K}^*$ is categorical in all $\lambda'\geq\min(\theta,h(\oop{LS}({\bf K}^*)))=\min(\theta,h(\ls))$. In particular it is categroical in $\theta^+$.
\item Since ${\bf K}^*$ is categorical in $\lambda$ (by saturation) and $\theta^+$, by \ref{lastfact}(1) it is categorical in all $\lambda'\in[\lambda,\theta]$. Combining with (2), it is categorical in all $\lambda'\geq\lambda$. 
\item By \ref{lastfact}(2), we may assume $\xi\leq h(\ls)$. We consider two cases:
\begin{enumerate}
\item $\xi\geq\lambda$: the models in ${\bf K}_\xi$ are saturated, in particular $\lambda$-saturated. Hence ${\bf K}_{\geq\xi}={\bf K}^*_{\geq\xi}$ is totally categorical as desired.
\item $\xi<\lambda$: by \ref{s3fact}(2), there is a good $(\geq\xi)$-frame over ${\bf K}^{**}\defeq{\bf K}^{\xi\text{-sat}}$. ${\bf K}^{**}$ is categorical in $\xi$ by saturation. By substituting $\xi$ by $h(\ls)$ in (a), we have ${\bf K}$, and hence ${\bf K}^{**}$ is categorical in all $\xi'\geq h(\ls)$. In particular ${\bf K}^{**}$ is categorical in $h(\ls)^+$. By the same argument as (3), ${\bf K}^{**}$ is categorical in all $\xi'\geq\xi$. Now we end up in the scenario of (a) with the new ``$\lambda$'' being $\xi$ so ${\bf K}_{\geq\xi}={\bf K}^{**}_{\geq\xi}$ which is totally categorical.
\end{enumerate}
\end{enumerate}
\end{proof}
We apply our theorem to prove known results:
\begin{example}
\begin{enumerate}
\item Complete first-order theories: by compactness the models of a complete first-order theory $T$ satisfy amalgamation over sets, joint-embedding and no maximal models. It has L\"{o}wenheim-Skolem number $|T|$ and is $(<\al)$-short. Therefore, we can use \ref{lastthm} transfer categoricity in any $\mu>|T|$ to all $\mu'\geq\mu$. However, we cannot conclude categoricity down to all $\mu'>|T|$ as in \cite{morcat,sh31} which used syntactic proofs.
\item Homogeneous diagrams with a monster model: let $T$ be a first-order theory and $D$ be a subset of syntactic $T$-types over the empty set. Let ${\bf K}_D$ be the class of models of $T$ such that the only types over the empty set they realize are from $D$, where the models are ordered by elementary substructures. Assuming the existence of a monster model (see the precise statements in \cite[Hypothesis 2.5]{GLspec} or \cite[Definition 4.2]{s11}), we have the same properties as those in (1). Hence we can transfer categoricity in any $\mu>|T|$ to all $\mu'\geq\mu$. \cite[Theorem 4.22]{s11} proved the same result using \ref{primefact} but also syntactic results from \cite{sh3}. Our approach is purely semantic. 
\end{enumerate}
\end{example}
Our theorem does not exclude the possibility that the first categoricity cardinal to be arbitrarily close to $h(\ls)$. The following example shows such categoricity behavior but unfortunately it fails amalgamation and joint-embedding.
\begin{example}\mylabel{lastexam}{Example \thetheorem}
Let $\lambda\geq\al$ and $\lambda\leq\alpha<(2^\lambda)^+$. By the construction of ${\bf K}_0$ and ${\bf K}_1$ in \cite[Proposition 4.1]{leung1}, there is ${\bf K}^\alpha$, an AEC that encodes the cumulative hierarchy $V_\alpha(\alpha)$. ${\bf K}^\alpha$ is ordered by $\oop{L}({\bf K}^\alpha)$-substructures, $\oop{LS}({\bf K}^\alpha)=\lambda$ and the models have sizes up to $\beth_\alpha(\lambda)$. Also, ${\bf K^\alpha}$ has joint-embedding but not amalgamation. Taking the disjoint union of ${\bf K}^\alpha$ with a totally categorical AEC, we obtain an AEC ${\bf K}$ whose first categoricity cardinal is $\beth_{\alpha}(\lambda)$, but it fails amalgamation and joint-embedding. 
\end{example}
\begin{remark}
\cite[Example 9.10(2)]{s31} claimed that by encoding the cumulative hierarchy, one could get such an example with amalgamation (which would provide a complete list of examples for his categoricity spectra). However, he did not provide the exact encoding or the ordering (which amalgamation is sensitive to), so we cannot verify his claim. A similar problem occurs in \cite[Example 9.10(3)]{s31} when he encoded an AEC ${\bf K}$ categorical only in $[{\ls}^{+m},{\ls}^{+n}]$ where $m,n<\omega$. If we use $\llk$-substructures as the ordering, amalgamation again fails because the functions (see $F$ in \cite[Fact 9.8]{s31}) might be computed differently.
\end{remark}
\begin{ques}
Let $\kappa\geq\al$. For $\mu<h(\kappa)$, is there an AEC ${\bf K}$ with $\ls=\kappa$ which is $\kappa$-short, has amalgamation over sets and arbitrarily large models such that the first categoricity cardinal (exists and) is greater than $\mu$? What if we replace amalgamation over sets by the usual amalgamation property (see also \hyperlink{catlist}{table}(10)(11))? 
\end{ques}
\bibliographystyle{alpha}
\bibliography{references}

{\small\setlength{\parindent}{0pt}
\textit{Email}: wangchil@andrew.cmu.edu

\textit{URL}: http://www.math.cmu.edu/$\sim$wangchil/

\textit{Address}: {Department of Mathematical Sciences, Carnegie Mellon University, Pittsburgh PA 15213, USA}
\end{document}